\documentclass[reqno]{amsart}

\usepackage{amsmath, amsfonts, amssymb}

\title[Right Lower-Triangular Nielsen
Automorphisms]{The Lower Central Series of Right Lower-Triangular Nielsen
Automorphism Groups}
\author{C. E. Kofinas}
\address{Department of Mathematics, University of the Aegean,
Karlovassi GR-83200, Samos, Greece}
\email{kkofinas@aegean.gr}

\newtheorem{theorem}{Theorem}
\newtheorem{lemma}{Lemma}
\newtheorem{corollary}{Corollary}
\newtheorem{proposition}{Proposition}
\newtheorem{remark}{Remark}

\begin{document}

\begin{abstract}
Let $F_n$ be a free group of rank $n\geq3$, freely generated by
$x_1,\ldots,x_n$. For $1\leq j<i\leq n$, let $d_{i,j}$ denote 
the Nielsen automorphism of $F_n$ defined by $d_{i,j}(x_i)=x_ix_j$ and
$d_{i,j}(x_k)=x_k$ for $k\neq i$, and let
$D_n=\langle d_{i,j}\mid 1\leq j<i\leq n\rangle$.
We determine the lower central series of $D_n$. 
We first prove that, for $1\leq r<i\leq n$, $d_{i,r}\in
\gamma_{i-r}(D_n)\setminus\gamma_{i-r+1}(D_n)$.
For each $i=2,\ldots,n$, this calculation leads to a filtration
$\{W_{i,c}\}_{c\geq1}$ of $U_i=\langle d_{i,1},\ldots,d_{i,i-1}\rangle$.  
For every $c\geq1$, we obtain an explicit iterated semidirect-product
decomposition of $\gamma_c(D_n)$ in terms of the subgroups $W_{i,c}$,
and prove that $U_i\cap\gamma_c(D_n)=W_{i,c}$ for
$i=2,\ldots,n$. 
The construction also gives an explicit basis for each quotient
$\gamma_c(D_n)/\gamma_{c+1}(D_n)$ in terms of basic commutators and
determines the exact lower-central depth of every such commutator. 
Consequently, $D_n$ is a Magnus group.

\vskip.120 in

\noindent\emph{Keywords}: Automorphism groups of free groups,
Nielsen automorphisms, unitriangular automorphisms, lower central
series, Magnus groups.

\vskip.120 in

\noindent 2020 Mathematics Subject Classification:
Primary 20F14; Secondary 20F28, 20E26.
\end{abstract}

\maketitle

\section{Introduction}

Let $F_n$ be a free group of rank $n\geq3$, freely generated by
$x_1,\ldots,x_n$. For distinct $i,j\in\{1,\ldots,n\}$, let
$d_{i,j}$ and $e_{i,j}$ be the Nielsen automorphisms of $F_n$
defined by
\[
\begin{split}
d_{i,j}(x_i)&=x_ix_j, \quad
d_{i,j}(x_k)=x_k \quad\text{for }k\neq i,\\
e_{i,j}(x_i)&=x_jx_i, \quad
e_{i,j}(x_k)=x_k \quad\text{for }k\neq i.
\end{split}
\]
The lower-triangular automorphism group $A_n^+$ is generated by all
$d_{i,j}$ and $e_{i,j}$ with $1\leq j<i\leq n$.
Satoh obtained a normal form and a finite presentation for $A_n^+$
and studied related homological properties in \cite{sat2}.
Related questions concerning the lower central series of triangular
automorphisms acting trivially on the abelianization of a free group
have been studied by Satoh \cite{sat1} and Darn\'e \cite{dar}, 
in connection with the Andreadakis filtration.

We study the subgroup
\[
D_n=\langle d_{i,j}\mid 1\leq j<i\leq n\rangle
\]
of $\operatorname{Aut}(F_n)$. We call $D_n$ 
the \emph{right lower-triangular Nielsen automorphism
group}: ``right'' refers to multiplication on the right in
$d_{i,j}(x_i)=x_ix_j$, while ``lower-triangular'' refers to the
condition $j<i$. We also consider the corresponding left
lower-triangular Nielsen automorphism group
\[
D_n^{\ell}=\langle e_{i,j}\mid 1\leq j<i\leq n\rangle.
\]
The groups $D_n$ and $D_n^{\ell}$ are conjugate in
$\operatorname{Aut}(F_n)$. The group $D_n^{\ell}$ is precisely the
unitriangular automorphism group considered by Erofeev and
Roman'kov. Their structure theorem \cite[Theorem~A]{er} gives a
normal series, an iterated semidirect product decomposition, and a
unique normal form for $D_n^{\ell}$. The conjugacy between $D_n$ and
$D_n^{\ell}$ transfers these structural results to $D_n$. The
linearity problem for these groups was subsequently settled by
Roman'kov \cite{rom}.

For $i=2,\ldots,n$, let
\[
U_i=\langle d_{i,1},\ldots,d_{i,i-1}\rangle.
\]
For $2\leq m\leq n$, write
\[
D_m=\langle U_2,\ldots,U_m\rangle.
\]
We record this structural description in our notation and
give a direct proof that
\[
D_n=
U_n\rtimes
\bigl(
U_{n-1}\rtimes
(\cdots\rtimes(U_3\rtimes U_2)\cdots)
\bigr),
\]
where $U_i$ is a free group of rank $i-1$ for $i=2,\ldots,n$. 
It follows immediately that $D_n$ is poly-free. We also use this decomposition 
to prove that $D_n$ has trivial centre.

Our main purpose is to determine the lower central series of $D_n$
and the intersections of its terms with the groups $U_i$ for $i=2,\ldots,n$. 
The preceding decomposition is not an iterated almost-direct product: for
$2\leq k<i\leq n$, the conjugation action of $U_k$ on
$U_i^{\mathrm{ab}}$ is non-trivial. Consequently, the
Falk--Randell description of the lower central series of an
almost-direct product does not apply directly
\cite{falkrandell}.

This non-trivial action is reflected in the different lower-central
depths of the generators. We prove that
\[
d_{i,r}\in
\gamma_{i-r}(D_n)\setminus\gamma_{i-r+1}(D_n),
\qquad
1\leq r<i\leq n.
\]
Accordingly, we assign weight $i-r$ to the generator $d_{i,r}$.
We extend these weights recursively to commutator expressions in
the free generators of $U_i$, thus obtaining a weighted degree. 
For $i=2,\ldots,n$ and $c\geq1$, let $W_{i,c}$ be the
subgroup of $U_i$ generated by the commutator expressions of
weighted degree at least $c$. Using Hall's collection process, we prove that
$\{W_{i,c}\}_{c\geq1}$ is an $N$-series of $U_i$ and that
$W_{i,c}/W_{i,c+1}$ is free abelian with basis given by the images
of the basic commutators of weighted degree $c$.

For a semidirect product $H\rtimes K$, the general formula
\[
\gamma_c(H\rtimes K)
=
\gamma_c^K(H)\rtimes\gamma_c(K)
\qquad(c\geq1)
\]
expresses its lower central series in terms of the lower central
series of $K$ and the relative lower central filtration of $H$; see
\cite[Theorem~1.1]{guaschipereiro} and
\cite[Proposition~1.33]{darsu}. 
For the decompositions $D_i=U_i\rtimes D_{i-1}$, the principal
step is therefore to identify the relative filtration. We prove that
\[
\gamma_c^{D_{i-1}}(U_i)=W_{i,c}
\qquad
(3\leq i\leq n,\ c\geq1).
\]
This determines every term of the lower central series of $D_n$,
identifies the intersection of each such term with $U_i$ for
$i=2,\ldots,n$, and provides explicit bases for all the
lower-central factors.

Our main result is the following.

\begin{theorem}\label{mainthm}
Let $n\geq3$. The following statements hold.
\begin{enumerate}
\item For every $i\in\{3,\ldots,n\}$ and every $c\geq1$,
\[
W_{i,c}=\gamma_c^{D_{i-1}}(U_i).
\]
Moreover, for every $i\in\{2,\ldots,n\}$ and every $c\geq1$,
\[
W_{i,c}
=
U_i\cap\gamma_c(D_i)
=
U_i\cap\gamma_c(D_n).
\]

\item For every $c\geq1$,
\[
\gamma_c(D_n)=
W_{n,c}\rtimes
\bigl(
W_{n-1,c}\rtimes
(\cdots\rtimes(W_{3,c}\rtimes W_{2,c})\cdots)
\bigr).
\]

\item For every $c\geq1$,
\[
\frac{\gamma_c(D_n)}{\gamma_{c+1}(D_n)}
\cong
\bigoplus_{i=2}^{n}
\frac{W_{i,c}}{W_{i,c+1}}.
\]
Moreover, the images of the basic commutators of weighted degree
$c$ in each $U_i$, $2\leq i\leq n$, together form a finite basis of
$\gamma_c(D_n)/\gamma_{c+1}(D_n)$. Every such basic commutator has
exact lower-central depth $c$ in both $D_i$ and $D_n$.

\item The group $D_n$ is a Magnus group.
\end{enumerate}
\end{theorem}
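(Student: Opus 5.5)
The plan is to prove part (1) first, by induction on $i$, and then read off parts (2)--(4) from it. The tools are the semidirect product formula $\gamma_c(H\rtimes K)=\gamma_c^K(H)\rtimes\gamma_c(K)$ and the weighted Hall basis result already established: $\{W_{i,c}\}_c$ is an $N$-series of $U_i$, and $W_{i,c}/W_{i,c+1}$ is free abelian on the basic commutators of weighted degree $c$.

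The starting observation is that $U_i$ is free on $d_{i,1},\dots,d_{i,i-1}$. Via $x_i\mapsto x_iw$ it is identified (up to order reversal) with the free group on $x_1,\dots,x_{i-1}$. Under this identification the conjugation action of $D_{i-1}$ on $U_i$ is the natural action of $D_{i-1}$ on $F_{i-1}$. In particular, conjugation by $d_{k,s}$ with $s<k<i$ sends $d_{i,k}$ to $d_{i,k}d_{i,s}$ (in the appropriate order) and fixes $d_{i,m}$ for $m\neq k$. Hence
\[
[d_{i,k},d_{k,s}]=d_{i,s}^{\pm1}\quad\text{(up to conjugation by elements of } U_i\text{)},
\]
and the weights add: $i-s=(i-k)+(k-s)$.

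\emph{Lower bound} $W_{i,c}\subseteq\gamma_c^{D_{i-1}}(U_i)$. I would first show $d_{i,r}\in\gamma_{i-r}^{D_{i-1}}(U_i)$ by downward induction on $r$. For $r=i-1$ this is trivial. For smaller $r$, use $d_{i,r}=[d_{i,r+1},d_{r+1,r}]^{\pm1}$ (modulo conjugation) together with $[\gamma_c^{K}(H),K]\subseteq\gamma_{c+1}^K(H)$. Since the relative filtration is an $N$-series with $[\gamma^K_a(H),\gamma^K_b(H)]\subseteq\gamma^K_{a+b}(H)$, every commutator expression of weighted degree $\ge c$ then lies in $\gamma_c^{D_{i-1}}(U_i)$.

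\emph{Upper bound.} By the minimality of the relative lower central filtration, it suffices to show that $\{W_{i,c}\}$ is $D_{i-1}$-invariant with
\[
[W_{i,c},D_{i-1}]\subseteq W_{i,c+1}.
\]
Since the $W_{i,c}$ already form an $N$-series of $U_i$, this forces $\gamma_c^{D_{i-1}}(U_i)\subseteq W_{i,c}$. I expect this to be the main obstacle. I would handle it with an Andreadakis-type filtration
\[
A_d=\{\varphi\in D_{i-1} : \varphi(w)w^{-1}\in W_{i,c+d}\ \text{for all } c \text{ and all } w\in W_{i,c}\}.
\]
The standard argument (three subgroup lemma / Hall--Witt) shows $\{A_d\}$ is an $N$-series of $D_{i-1}$. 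Each generator $d_{k,s}$ lies in $A_{k-s}\subseteq A_1$. To see this, note that $d_{k,s}$ multiplies the weight-$(i-k)$ generator $d_{i,k}$ by the weight-$(i-s)$ generator $d_{i,s}$, so it shifts weights by $k-s\ge 1$ on generators. Because $W_{i,c+1}$ is normal and the $W$'s form an $N$-series, this shift propagates to all commutator expressions. Thus $D_{i-1}=A_1$. More strongly, the weighted $N$-series of $D_{i-1}$ generated by the $d_{k,s}$ in weight $k-s$ sits inside $\{A_d\}$, so $[W_{i,c},\gamma_d(D_{i-1})]\subseteq W_{i,c+d}$. The delicate point to check is that the shift really propagates from generators of $U_i$ to all of $W_{i,c}$. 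This needs the identity $\varphi([a,b])=[\varphi(a),\varphi(b)]$ combined with the normality of each $W_{i,c}$ in $U_i$.

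Part (2) then follows by induction on $n$ from $D_i=U_i\rtimes D_{i-1}$ and the semidirect product formula, with base $\gamma_c(D_2)=\gamma_c(U_2)$. Here $U_2=\langle d_{2,1}\rangle$ is infinite cyclic with weight $1$, so $W_{2,c}=1$ for $c\ge2$. The intersections $U_i\cap\gamma_c(D_i)=U_i\cap\gamma_c(D_n)=W_{i,c}$ follow by projecting onto the semidirect factors, using that $D_n=U_n\rtimes D_{n-1}$ retracts onto $D_{n-1}$, and then iterating down to $D_i$. For part (3), the decomposition in (2) for $c$ and $c+1$ is compatible factor by factor, giving an isomorphism of the quotient with $\bigoplus_i W_{i,c}/W_{i,c+1}$ as abelian groups. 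The weighted Hall basis then provides the basis, and exact depth follows from the intersection formula in (1). For part (4), the factors in (3) are free abelian. Moreover, weights are $\ge1$, so $W_{i,c}\subseteq\gamma_c(U_i)$, and therefore $\bigcap_c W_{i,c}=1$ since free groups are residually nilpotent. By (2) this gives $\bigcap_c\gamma_c(D_n)=1$, so $D_n$ is a Magnus group.
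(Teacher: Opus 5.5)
One step in part (4) fails as written. You infer from $\operatorname{wt}(d_{i,r})\geq 1$ that $W_{i,c}\subseteq\gamma_c(U_i)$. The lower bound on weights only gives $\ell(u)\leq\operatorname{wt}(u)$, which yields the reverse inclusion $\gamma_c(U_i)\subseteq W_{i,c}$. Your inclusion is in fact false for every $i\geq3$: $d_{i,1}$ has weighted degree $i-1\geq2$, so $d_{i,1}\in W_{i,2}$, yet it is a free generator of $U_i$ and hence not in $\gamma_2(U_i)=U_i'$.

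The repair uses the \emph{upper} bound on weights. Since $\operatorname{wt}(u)\leq(i-1)\ell(u)$ for every commutator expression $u$, you get $W_{i,c}\subseteq\gamma_{\lceil c/(i-1)\rceil}(U_i)$. Residual nilpotence of the free group $U_i$ then still gives $\bigcap_c W_{i,c}=\{1\}$; this is Lemma~\ref{weightedhall}(3). Combined with (2) and the uniqueness of the normal form from Proposition~\ref{pro1}, this gives $\bigcap_c\gamma_c(D_n)=\{1\}$, so the conclusion of (4) stands.

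Otherwise your argument is correct, and part (1) takes a somewhat different route from the paper.

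\emph{Lower bound.} You prove $d_{i,r}\in\gamma^{D_{i-1}}_{i-r}(U_i)$ directly from $(d_{i,r+1},d_{r+1,r})=d_{i,r}^{-1}$ and the $N$-series property of the relative filtration. The paper instead proves $W_{i,c}\subseteq U_i\cap\gamma_c(D_i)$ via Proposition~\ref{prop-depth}. It then identifies $U_i\cap\gamma_c(D_i)$ with $\gamma^{D_{i-1}}_c(U_i)$ using the split-extension formula and $U_i\cap D_{i-1}=\{1\}$.

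\emph{Upper bound.} You observe that the recursive definition $\gamma^K_{c+1}(H)=(\gamma^K_c(H),H\rtimes K)$, together with $(w,hk)=(w,k)(w,h)^k$, makes $(W_{i,c},D_{i-1})\subseteq W_{i,c+1}$ sufficient on its own. The paper instead upgrades this inclusion to $(\gamma_a(D_{i-1}),W_{i,b})\subseteq W_{i,a+b}$ via Darn\'e's Corollary~1.25, then appeals to the minimality statement of his Lemma~1.31. Your reduction is more elementary at this point.

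\emph{Key computation.} This is the same in both: each $d_{k,s}^{\pm1}$ moves $d_{i,k}$ only by a factor of strictly larger weight, and this propagates through commutator expressions by the $N$-series property of $\{W_{i,c}\}$. The paper passes from generators to all of $D_{i-1}$ by noting that the automorphisms inducing the identity on $W_{i,c}/W_{i,c+1}$ form a group. Your filtration $\{A_d\}$ does the same job and records the sharper shift $k-s$. Its full $N$-series property (via the three subgroup lemma), however, is only needed for your extra inclusion $(W_{i,c},\gamma_d(D_{i-1}))\subseteq W_{i,c+d}$, not for the theorem.
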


\section{Preliminaries and notation}\label{sec-prelim}

Let $G$ be a group. For $m\geq1$, let $a_1,\ldots,a_m\in G$ and
$H_1,\ldots,H_m\leq G$. We denote by
$\langle a_1,\ldots,a_m\rangle$ and
$\langle H_1,\ldots,H_m\rangle$ the subgroups of $G$ generated by
$a_1,\ldots,a_m$ and by $H_1,\ldots,H_m$, respectively. For
$a,b\in G$, we write $a^b=b^{-1}ab$. For $H\leq G$ and $g\in G$,
we write $H^g=\{h^g\mid h\in H\}=g^{-1}Hg$.
An element $g\in G$ \emph{normalizes} $H$ if $H^g=H$. A subgroup
$K\leq G$ \emph{normalizes} $H$ if every element of $K$ normalizes
$H$. 

For $a,b\in G$, their commutator is the
element $(a,b)=a^{-1}a^b=a^{-1}b^{-1}ab$.
With this convention, $(a,b)^{-1}=(b,a)$. For $a,b,c\in G$, we
shall use the standard commutator identities
\[
(ab,c)=(a,c)^b(b,c),
\qquad
(a,bc)=(a,c)(a,b)^c.
\]
Throughout the paper, commutators with more than two entries are
left-normed and are defined recursively by
\[
(a_1,a_2,\ldots,a_m)
=
((a_1,\ldots,a_{m-1}),a_m)
\qquad(m\geq3).
\] 
For $H_1,H_2\leq G$, we write
$(H_1,H_2)=\langle(h_1,h_2)\mid h_1\in H_1,\ h_2\in H_2\rangle$.

For $c\geq 1$, we write $\gamma_c(G)$ for the $c$-th term of the
lower central series of $G$, where $\gamma_1(G)=G$ and
$\gamma_{c+1}(G)=(\gamma_c(G),G)$. In particular,
$\gamma_2(G)=G'$. We write $G^{\mathrm{ab}}=G/G'$ for the
abelianization of $G$. If $g\in\gamma_c(G)\setminus\gamma_{c+1}(G)$, 
we say that $g$ has
\emph{exact lower-central depth} $c$.
An $N$-series of $G$ is a descending sequence
$G=G_1\geq G_2\geq\cdots$ of normal subgroups of $G$ such that
$(G_a,G_b)\subseteq G_{a+b}$ for all $a,b\geq 1$. 
We say that $G$ is \emph{residually nilpotent} if
$\bigcap_{c\geq 1}\gamma_c(G)=\{1\}$. We call $G$ a \emph{Magnus group} if it
is residually nilpotent and
$\gamma_c(G)/\gamma_{c+1}(G)$ is torsion-free for every $c\geq1$.
A group is called \emph{poly-free} if it admits a finite subnormal
series whose successive quotient groups are free.

Let $F_n$ be the free group of rank $n$, freely generated by
$x_1,\ldots,x_n$. The action of $\operatorname{Aut}(F_n)$ on the
abelianization of $F_n$ induces an epimorphism
\[
\pi:\operatorname{Aut}(F_n)\longrightarrow
\operatorname{Aut}(F_n/F_n').
\]
Since $F_n/F_n'$ is a free abelian group of rank $n$, we have
$\operatorname{Aut}(F_n/F_n')\cong\operatorname{GL}_n(\mathbb Z)$. 

For distinct $i,j\in\{1,\ldots,n\}$, let $d_{i,j}$ be the
automorphism of $F_n$ defined by $d_{i,j}(x_i)=x_ix_j$ and
$d_{i,j}(x_k)=x_k$ for $1\leq k\leq n$, $k\neq i$, and let
$e_{i,j}$ be the automorphism defined by $e_{i,j}(x_i)=x_jx_i$ and
$e_{i,j}(x_k)=x_k$ for $1\leq k\leq n$, $k\neq i$. For
$i=2,\ldots,n$, write
\[
U_i=\langle d_{i,1},d_{i,2},\ldots,d_{i,i-1}\rangle,
\qquad
H_i=\langle e_{i,1},e_{i,2},\ldots,e_{i,i-1}\rangle.
\]
We set
\[
D_n=\langle U_2,\ldots,U_n\rangle,
\qquad
D_n^{\ell}=\langle H_2,\ldots,H_n\rangle.
\]
For $1\leq m\leq n$, let $F_m$ be the free subgroup of $F_n$ 
freely generated by $x_1,\ldots,x_m$. For $2\leq m\leq n$, write
\[
D_m=\langle U_2,\ldots,U_m\rangle.
\]
Every element of $D_m$ preserves $F_m$ and fixes
$x_{m+1},\ldots,x_n$ pointwise. Thus restriction to $F_m$
identifies $D_m$ with the corresponding subgroup of
$\operatorname{Aut}(F_m)$.

The group $D_n^{\ell}$ is precisely the
group considered by Erofeev and Roman'kov \cite{er}. Indeed, for
$1\leq j<i\leq n$, their elementary generator $\lambda_{i,j}$,
defined by $\lambda_{i,j}(x_i)=x_jx_i$ and $\lambda_{i,j}(x_k)=x_k$ 
for $1\leq k\leq n$, $k\neq i$, is precisely
our automorphism $e_{i,j}$. Their structure theorem
\cite[Theorem~A]{er} gives a normal series, a unique normal form, and
an iterated semidirect product decomposition for $D_n^{\ell}$.

Since $\pi(d_{i,j})=\pi(e_{i,j})$ for all distinct $i,j\in\{1,\ldots,n\}$, 
the groups $D_n$ and $D_n^{\ell}$ have the same image under $\pi$. We denote
this common image by
\[
\Lambda_n=\pi(D_n)=\pi(D_n^{\ell}).
\]
With respect to the ordered basis $\overline{x}_1,\ldots,\overline{x}_n$ 
of $F_n/F_n'$, we have
\[
\pi(d_{i,j})=I+M_{j,i},
\qquad 1\leq j<i\leq n,
\]
where $M_{k,l}$ denotes the matrix with $1$ in the $(k,l)$-entry
and $0$ elsewhere. Define 
\[
\iota:\operatorname{GL}_n(\mathbb Z)
\longrightarrow\operatorname{GL}_n(\mathbb Z),
\qquad
\iota(A)=A^{-T}.
\]
This map is an automorphism, since
\[
\iota(AB)=(AB)^{-T}=A^{-T}B^{-T}
=\iota(A)\iota(B),
\qquad
\iota^2=\operatorname{id}.
\]
Via $\iota$, we identify the original group $\Lambda_n$ with the
lower unitriangular group $\iota(\Lambda_n)$. With this
identification understood, we retain the notation $\Lambda_n$ for
$\iota(\Lambda_n)$ and the notation $\pi$ for the composite
$\iota\circ\pi$. Thus, for $1\leq j<i\leq n$, we write
\[
T_{i,j}=\pi(d_{i,j})
=(I+M_{j,i})^{-T}
=I-M_{i,j}.
\]
Consequently,
\[
\Lambda_n
=
\left\langle T_{i,j}\mid 1\leq j<i\leq n\right\rangle
=
\left\langle I-M_{i,j}\mid 1\leq j<i\leq n\right\rangle.
\]
The group $\Lambda_n$ is nilpotent
of class $n-1$; when $n=3$, it is the integral Heisenberg group. 
In \cite[pp.~360--361]{mag}, Magnus obtained a presentation for $\Lambda_n$ and 
a normal form for its elements. Satoh obtained a presentation and a
normal form for the lower-triangular automorphism group
$A_n^+=\langle D_n,D_n^{\ell}\rangle$ in
\cite[Lemma~3.1 and Theorem~3.2]{sat2}. 
(In Satoh's notation, $d_{i,j}=E_{ij}$ and
$e_{i,j}=E_{i^{-1}j}^{-1}$; hence $D_n^{\ell}$ is generated by the
automorphisms $E_{i^{-1}j}$ with $1\leq j<i\leq n$.)

Let $\theta\in\operatorname{Aut}(F_n)$ be defined by
$\theta(x_r)=x_r^{-1}$ for $r=1,\ldots,n$. A direct calculation
gives $\theta d_{i,j}\theta^{-1}=e_{i,j}$ for all distinct
$i,j\in\{1,\ldots,n\}$. Hence
$\theta D_n\theta^{-1}=D_n^{\ell}$ and
$\theta U_i\theta^{-1}=H_i$ for every $i=2,\ldots,n$. Thus $D_n$
and $D_n^{\ell}$ are conjugate in $\operatorname{Aut}(F_n)$, and the
structural results of Erofeev and Roman'kov for $D_n^{\ell}$ transfer
to $D_n$. Throughout the remainder of the paper, we work with $D_n$.

\section{The structure of $D_n$}\label{sec-structure}

We first record some elementary facts about the generators of
$D_n$ that will be used throughout the paper.

\begin{lemma}\label{lem1}
\begin{enumerate}
\item For $2\leq i\leq n$, the homomorphism
\[
\varphi_i:F_{i-1}\longrightarrow U_i,
\qquad
\varphi_i(x_r)=d_{i,r}\quad (1\leq r<i),
\]
is an isomorphism. Moreover, for every $w\in F_{i-1}$,
\[
\varphi_i(w)(x_i)=x_iw,
\qquad
\varphi_i(w)(x_t)=x_t\quad(1\leq t\leq n,\ t\neq i).
\]

\item Let $1\leq l<k<i\leq n$ and $1\leq j<i$. Then
\[
(d_{i,j},d_{k,l})
=
\begin{cases}
d_{i,l}^{-1}, & j=k,\\
1, & j\neq k.
\end{cases}
\]
\end{enumerate}
\end{lemma}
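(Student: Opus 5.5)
The plan is to verify both parts directly on the free generators $x_1,\ldots,x_n$ of $F_n$. Automorphisms are composed as maps, so $(\alpha\beta)(x)=\alpha(\beta(x))$. The only inputs are that each $d_{i,j}$ moves only $x_i$, and that an automorphism is determined by its values on $x_1,\ldots,x_n$.

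For part (1), I will first define, for each $w\in F_{i-1}$, the endomorphism $\psi(w)$ of $F_n$ by $\psi(w)(x_i)=x_iw$ and $\psi(w)(x_t)=x_t$ for $t\neq i$. Let $u,v\in F_{i-1}$. The map $\psi(u)$ fixes $x_1,\ldots,x_{i-1}$, hence fixes $v$, so
\[
\psi(u)\psi(v)(x_i)=\psi(u)(x_iv)=x_iuv=\psi(uv)(x_i),
\]
and both sides fix every $x_t$ with $t\neq i$. Thus $\psi(uv)=\psi(u)\psi(v)$ and $\psi(1)=\mathrm{id}$. It follows that $\psi(w^{-1})$ is inverse to $\psi(w)$, so each $\psi(w)$ is an automorphism and $\psi:F_{i-1}\to\operatorname{Aut}(F_n)$ is a homomorphism. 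Since $\psi(x_r)=d_{i,r}$, the homomorphism $\psi$ agrees with $\varphi_i$ on the free basis, so $\psi=\varphi_i$; this gives the displayed formula and shows $\varphi_i$ maps onto $U_i$. Injectivity is immediate: if $\varphi_i(w)=\mathrm{id}$, then $x_iw=x_i$ in $F_n$, so $w=1$.

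For part (2), I will evaluate
\[
(d_{i,j},d_{k,l})=d_{i,j}^{-1}d_{k,l}^{-1}d_{i,j}d_{k,l}
\]
on each generator, applying the rightmost factor first. Since $l<k<i$, we have $k,l\neq i$. Every $x_t$ with $t\notin\{i,k\}$ is fixed by all four factors.

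On $x_k$, the factor $d_{k,l}$ gives $x_kx_l$, which $d_{i,j}$ fixes. Then $d_{k,l}^{-1}$ returns it to $x_k$, which $d_{i,j}^{-1}$ fixes.

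On $x_i$, the factor $d_{k,l}$ fixes $x_i$, and $d_{i,j}$ gives $x_ix_j$. Applying $d_{k,l}^{-1}$ yields $x_ix_kx_l^{-1}$ if $j=k$, and $x_ix_j$ if $j\neq k$. Finally $d_{i,j}^{-1}$ sends $x_i\mapsto x_ix_j^{-1}$ and fixes $x_j,x_k,x_l$. The result is $x_ix_k^{-1}x_kx_l^{-1}=x_il_{}^{}$-free form $x_ix_l^{-1}$ when $j=k$, and $x_ix_j^{-1}x_j=x_i$ when $j\neq k$.

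Comparing with part (1), where $\varphi_i(x_l^{-1})=d_{i,l}^{-1}$ sends $x_i\mapsto x_ix_l^{-1}$, the commutator equals $d_{i,l}^{-1}$ if $j=k$ and the identity otherwise.

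No step is a real obstacle. The only point needing care is keeping the composition convention fixed, since it controls whether $\varphi_i$ is a homomorphism rather than an anti-homomorphism and which sign appears in part (2).
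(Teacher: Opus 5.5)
Your proposal is correct and follows essentially the paper's proof: part (1) rests on each $d_{i,r}^{\pm1}$ fixing $F_{i-1}$ pointwise, which you package as the homomorphism property of $\psi$ and the paper as a direct word computation, and part (2) is the same evaluation of $d_{i,j}^{-1}d_{k,l}^{-1}d_{i,j}d_{k,l}$ on generators, giving $x_ix_l^{-1}$ on $x_i$ when $j=k$. The only blemish is the garbled phrase in your final computation on $x_i$, which should simply read $x_ix_k^{-1}x_kx_l^{-1}=x_ix_l^{-1}$.
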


\begin{proof}
\begin{enumerate}
\item Fix $i\in\{2,\ldots,n\}$. The map $\varphi_i$ is surjective
by the definition of $U_i$.
Let $w\in F_{i-1}$, and write
$w=x_{r_1}^{\varepsilon_1}\cdots x_{r_m}^{\varepsilon_m}$, where
$1\leq r_s<i$ and $\varepsilon_s\in\{\pm1\}$ for
$s=1,\ldots,m$.
Then
\[
\varphi_i(w)=
d_{i,r_1}^{\varepsilon_1}\cdots
d_{i,r_m}^{\varepsilon_m}.
\]
Since every $d_{i,r_s}^{\varepsilon_s}$ fixes
$F_{i-1}$ pointwise, it follows that
\[
\varphi_i(w)(x_i)
=
x_ix_{r_1}^{\varepsilon_1}\cdots
x_{r_m}^{\varepsilon_m}
=
x_iw,
\]
while $\varphi_i(w)(x_t)=x_t$ for every
$1\leq t\leq n$ with $t\neq i$.

If $\varphi_i(w)=1$, then $x_iw=x_i$, and hence $w=1$ in the free
group $F_{i-1}$. Thus $\varphi_i$ is injective and therefore an
isomorphism.

\item Write $\alpha=d_{i,j}$ and $\beta=d_{k,l}$, where
$1\leq l<k<i\leq n$ and $1\leq j<i$. If $j\neq k$, then
$\beta$ fixes both $x_i$ and $x_j$, while $\alpha$ fixes both $x_k$
and $x_l$. Hence $\alpha$ and $\beta$ commute. Suppose that $j=k$. Then
\[
(\alpha,\beta)(x_i)=
\alpha^{-1}\beta^{-1}\alpha\beta(x_i)=
x_ix_l^{-1}.
\]
Moreover, $(\alpha,\beta)$ fixes every $x_t$ with
$1\leq t\leq n$ and $t\neq i$.
Therefore $(\alpha,\beta)=d_{i,l}^{-1}$.
\end{enumerate}
\end{proof}

The following proposition is a special case of the
structure theorem of Erofeev and Roman'kov
\cite[Theorem~A]{er}, transferred from $D_n^{\ell}$ to $D_n$ by
conjugation. Since the argument is short, we include a direct proof
in our notation.

\begin{proposition}\label{pro1}
For $n\geq3$, the group $D_n$ has the iterated semidirect product
decomposition
\[
D_n
=
U_n\rtimes
\bigl(U_{n-1}\rtimes
(\cdots\rtimes(U_3\rtimes U_2)\cdots)\bigr),
\]
where $U_i$ is free of rank $i-1$ for $i=2,\ldots,n$. 
In particular, every element of $D_n$ has a unique expression of the form
\[
u_nu_{n-1}\cdots u_2,
\qquad
u_i\in U_i\quad(i=2,\ldots,n).
\]
\end{proposition}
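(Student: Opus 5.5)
The plan is to prove by induction on $m$, for $2\leq m\leq n$, that
\[
D_m=U_m\rtimes D_{m-1}, \qquad D_1=1,
\]
with $U_m$ normal in $D_m$ and $U_m\cap D_{m-1}=1$. Freeness of $U_i$ of rank $i-1$ is already Lemma~\ref{lem1}(1). Once the one-step decomposition is established for every $m$, iterating it gives the displayed iterated semidirect product. Uniqueness of the expression $u_nu_{n-1}\cdots u_2$ then follows by peeling off one factor at a time.

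\textbf{Normality of $U_m$ in $D_m$.} Since $D_m=\langle U_m,D_{m-1}\rangle$, it suffices to show that each generator $d_{k,l}^{\pm1}$ of $D_{m-1}$, with $l<k<m$, conjugates $U_m$ into itself. By Lemma~\ref{lem1}(2), for $1\leq j<m$ we have $(d_{m,j},d_{k,l})\in\{1,d_{m,l}^{-1}\}$. Hence $d_{m,j}^{d_{k,l}}=d_{m,j}(d_{m,j},d_{k,l})\in U_m$.

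For inverse conjugation I will argue more cleanly through Lemma~\ref{lem1}(1). Any $\beta\in D_{m-1}$ fixes $x_m$ and restricts to an automorphism of $F_{m-1}$. For $w\in F_{m-1}$, a direct evaluation gives
\[
\beta^{-1}\varphi_m(w)\beta=\varphi_m\bigl(\beta^{-1}(w)\bigr).
\]
Both sides fix every $x_t$ with $t\neq m$, and on $x_m$ the left side gives $\beta^{-1}(x_m\,\beta(w)\ldots)$, so I will check this carefully. Once verified, this shows that $D_{m-1}$ normalizes $U_m$, and that the conjugation action is the natural action of $D_{m-1}\leq\operatorname{Aut}(F_{m-1})$ transported through $\varphi_m$. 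Also $U_m$ normalizes itself, so $U_m\trianglelefteq D_m$ and $D_m=U_mD_{m-1}$.

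\textbf{Trivial intersection.} Every element of $D_{m-1}$ fixes $x_m$. By contrast, $\varphi_m(w)(x_m)=x_mw$, which differs from $x_m$ unless $w=1$. Hence $U_m\cap D_{m-1}=1$, which completes the induction step.

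\textbf{Uniqueness.} Suppose $u_n\cdots u_2=u_n'\cdots u_2'$. Then $u_n'^{-1}u_n\in U_n\cap D_{n-1}=1$, and we recurse on the remaining factors in $D_{n-1}$.

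The main obstacle is the conjugation formula $\beta^{-1}\varphi_m(w)\beta=\varphi_m(\beta^{-1}(w))$. The conventions for composing automorphisms, and the notation $a^b=b^{-1}ab$, need care here: depending on the convention, the formula may come out as $\varphi_m(\beta(w))$. Either version suffices for normality. If the computation becomes messy, I will fall back on the generator-level statement from Lemma~\ref{lem1}(2) together with its analogue for $d_{k,l}^{-1}$. That analogue is obtained by inverting: $d_{m,j}^{d_{k,l}^{-1}}$ is again $d_{m,j}$ or $d_{m,j}d_{m,l}$.
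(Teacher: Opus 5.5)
Your proposal is correct and follows the same route as the paper. Both arguments prove normality of $U_m$ in $D_m=\langle U_m,D_{m-1}\rangle$, show $U_m\cap D_{m-1}=\{1\}$ via the action on $x_m$, iterate, prove uniqueness by peeling off factors, and take freeness from Lemma~\ref{lem1}(1). The only variation is your equivariance identity $\beta^{-1}\varphi_m(w)\beta=\varphi_m(\beta^{-1}(w))$, which the paper does not use. It holds exactly as written under the paper's function-composition convention: on $x_m$ it is simply $\beta^{-1}(x_mw)=x_m\beta^{-1}(w)$, since $\beta(x_m)=x_m$, not an expression involving $\beta(w)$. It gives $\beta^{-1}U_m\beta=U_m$ at once, whereas the paper checks conjugation by $d_{k,l}^{\pm1}$ on the free generators using Lemma~\ref{lem1}(2), exactly as in your fallback.
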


\begin{proof}
For $m=3,\ldots,n$, recall that
$D_m=\langle U_2,\ldots,U_m\rangle$. Let
$1\leq j<m$ and $1\leq l<k<m$. Then $d_{m,j}$ is a free
generator of $U_m$, while $d_{k,l}$ is a generator of
$D_{m-1}$. By Lemma~\ref{lem1}(2),
\[
d_{m,j}^{\,d_{k,l}}=
d_{m,j}(d_{m,j},d_{k,l})=
\begin{cases}
d_{m,k}d_{m,l}^{-1}, & j=k,\\
d_{m,j}, & j\neq k.
\end{cases}
\]
The displayed formula shows that
\[
U_m^{d_{k,l}}\subseteq U_m.
\]
Since $d_{m,l}$ is fixed by $d_{k,l}$, the inverse conjugation is
given on the free generators of $U_m$ by
\[
d_{m,j}^{\,d_{k,l}^{-1}}
=
\begin{cases}
d_{m,k}d_{m,l}, & j=k,\\
d_{m,j}, & j\neq k.
\end{cases}
\]
Hence
\[
U_m^{d_{k,l}^{-1}}\subseteq U_m,
\]
and therefore $U_m^{d_{k,l}}=U_m$. Since the elements $d_{k,l}$,
with $1\leq l<k<m$, generate $D_{m-1}$, we have
$U_m^v=U_m$ for every $v\in D_{m-1}$. Since $D_m$ is generated by
$U_m$ and $D_{m-1}$, it follows that
$U_m\trianglelefteq D_m$.

Since $D_m$ is generated by $U_m$ and $D_{m-1}$, we have
\[
D_m=U_mD_{m-1}.
\]
Moreover,
\[
U_m\cap D_{m-1}=\{1\}.
\]
Indeed, every element of $D_{m-1}$ fixes $x_m$. On the other hand,
Lemma~\ref{lem1}(1) shows that every element of $U_m$ has the form
$\varphi_m(w)$ for some $w\in F_{m-1}$ and sends $x_m$ to $x_mw$.
Such an element fixes $x_m$ only when $w=1$, in which case it is the
identity. Therefore $D_m=U_m\rtimes D_{m-1}$.

Applying this successively for $m=n,n-1,\ldots,3$ gives
\[
D_n=
U_n\rtimes
\bigl(U_{n-1}\rtimes
(\cdots\rtimes(U_3\rtimes U_2)\cdots)\bigr).
\]
The uniqueness of the expression $u_nu_{n-1}\cdots u_2$ follows
from the uniqueness of the decomposition at each semidirect-product
stage. Finally, for every
$i=2,\ldots,n$, Lemma~\ref{lem1}(1) gives
$U_i\cong F_{i-1}$, so $U_i$ is free of rank $i-1$.
\end{proof}

\begin{remark}
\upshape
A semidirect product $N\rtimes H$ is called almost direct if the
action of $H$ on $N^{\mathrm{ab}}$ is trivial. The decomposition in
Proposition~\ref{pro1} is not an iterated almost-direct product.
Indeed, if $1\leq l<j<i\leq n$, then Lemma~\ref{lem1}(2) gives
\[
(d_{i,j},d_{j,l})=d_{i,l}^{-1}.
\]
Consequently,
\[
d_{i,j}^{\,d_{j,l}}
=
d_{i,j}d_{i,l}^{-1}.
\]
Although $d_{i,l}\in D_n'$, we have $d_{i,l}\notin U_i'$, since, 
by Lemma~\ref{lem1}(1),
$d_{i,l}$ is one of the free generators of $U_i$. Therefore, writing
$\overline{u}$ for the image of $u\in U_i$ in $U_i^{\mathrm{ab}}$ and
using additive notation, we have
\[
\overline{d_{i,j}^{\,d_{j,l}}}=
\overline{d_{i,j}}-\overline{d_{i,l}}
\neq
\overline{d_{i,j}}.
\]
Hence the action of $d_{j,l}$ on $U_i^{\mathrm{ab}}$ is non-trivial.
\end{remark}

We next record a consequence of the structural decomposition.

\begin{corollary}\label{cor-structure}
For $n\geq3$, the group $D_n$ admits a poly-free series of length
$n-1$ with successive quotients that are free groups of ranks
$1,2,\ldots,n-1$.
\end{corollary}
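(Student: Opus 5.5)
The plan is to read the series off Proposition~\ref{pro1} directly. I will use the normal subgroups $D_m$, ordered so that each is normal in the next and every quotient is free. Set
\[
N_k=U_nU_{n-1}\cdots U_{n-k+1}\qquad(k=0,1,\ldots,n-1),
\]
so that $N_0=\{1\}$ and $N_{n-1}=D_n$.

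First I check that each $N_k$ is a subgroup and that it is normal in $D_n$. The key fact is the one already used in the proof of Proposition~\ref{pro1}: $U_m\trianglelefteq D_m$ for every $m$. We also need that $U_m$ is normalized by $U_{m'}$ for $m'>m$. I will not try to prove that $U_m$ is normal in $D_n$, which is false. Instead I use a filtration indexed from the bottom.

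Define $G_m=D_m$ for $m=2,\ldots,n$, and set $G_1=\{1\}$. The semidirect decomposition $D_m=U_m\rtimes D_{m-1}$ gives a chain
\[
\{1\}=G_1\le G_2=U_2\le G_3\le\cdots\le G_n=D_n .
\]
This chain is not subnormal in the right direction, since $D_{m-1}$ is a complement rather than a normal subgroup. So I will use the other chain:
\[
K_m=U_nU_{n-1}\cdots U_{m},\qquad m=2,\ldots,n+1,
\]
with $K_{n+1}=\{1\}$ and $K_2=D_n$.

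I then need $K_{m+1}\trianglelefteq K_m$. Since $K_m=U_m K_{m+1}$, it suffices to show two things.
\begin{enumerate}
\item $K_{m+1}$ is a subgroup.
\item $K_{m+1}$ is normalized by $U_m$.
\end{enumerate}
Both follow by downward induction from the fact that each $U_j$ is normalized by every $U_l$ with $l<j$. That fact holds because $U_l\le D_{j-1}$ and $U_j\trianglelefteq D_j$.

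Concretely, $K_{m+1}$ is closed under multiplication by the following reordering argument. In a product, move elements of $U_j$ leftward past elements of $U_l$ with $l<j$, using
\[
u_lu_j=u_j^{u_l^{-1}}u_l,\qquad u_j^{u_l^{-1}}\in U_j .
\]
The same identity shows that conjugation by $U_m$ preserves each $U_j$ for $j>m$, and hence preserves $K_{m+1}$.

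Next I identify the quotient. The map
\[
K_m=U_mK_{m+1}\longrightarrow K_m/K_{m+1}\cong U_m/(U_m\cap K_{m+1})
\]
is well defined, and I need $U_m\cap K_{m+1}=\{1\}$. This comes from the uniqueness of the normal form $u_n\cdots u_2$ in Proposition~\ref{pro1}. An element of $U_m\cap K_{m+1}$ has two normal forms: one in which only the $U_m$-coordinate may be nontrivial, and one in which only the $U_j$-coordinates with $j>m$ may be nontrivial. Uniqueness forces both to be trivial. Hence $K_m/K_{m+1}\cong U_m$, which is free of rank $m-1$ by Lemma~\ref{lem1}(1).

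The series
\[
D_n=K_2\trianglerighteq K_3\trianglerighteq\cdots\trianglerighteq K_{n+1}=\{1\}
\]
has length $n-1$, and its successive quotients are free of ranks $1,2,\ldots,n-1$, read from $K_2/K_3\cong U_2$ upward.

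The only delicate step is the closure and normality bookkeeping for $K_{m+1}$. It is routine once the reordering argument is set up. Alternatively, one can avoid it by working within $D_n$ from the top, using $D_n=U_n\rtimes D_{n-1}$ and induction on $n$. The series for $D_{n-1}$ pulls back along the projection $D_n\to D_{n-1}\cong D_n/U_n$, and then $U_n$ is appended at the bottom. I would present this inductive version, since it needs only Proposition~\ref{pro1} at each stage.
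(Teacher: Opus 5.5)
Your proof is correct and is essentially the paper's argument. Your $K_m=U_n\cdots U_m$ is the paper's $P_m=\langle U_m,\ldots,U_n\rangle$. You make explicit what the paper compresses into ``Proposition~\ref{pro1} gives $P_i=P_{i+1}\rtimes U_i$'': namely, $K_{m+1}$ is normalized by $U_m$ (since $U_j\trianglelefteq D_j$ and $U_m\le D_{j-1}$ for $j>m$), and $U_m\cap K_{m+1}=\{1\}$ by uniqueness of the normal form.

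In the write-up, delete the false starts. The $D_m$ do not form a subnormal chain. The sentence ``$U_m$ is normalized by $U_{m'}$ for $m'>m$'' has the direction reversed and is false: for example, $d_{n-1,1}^{\,d_{n,n-1}}=d_{n-1,1}d_{n,1}\notin U_{n-1}$. The argument you actually run uses the correct direction.
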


\begin{proof}
For $2\leq i\leq n$, set
\[
P_i=\langle U_i,U_{i+1},\ldots,U_n\rangle,
\qquad
P_{n+1}=\{1\}.
\]
Proposition~\ref{pro1} gives
\[
P_i=P_{i+1}\rtimes U_i
\qquad(2\leq i\leq n).
\] 
Hence
\[
\{1\}=P_{n+1}\lhd P_n\lhd P_{n-1}\lhd\cdots
\lhd P_2=D_n,
\]
and
\[
P_i/P_{i+1}\cong U_i\cong F_{i-1}
\qquad (2\leq i\leq n).
\]
Hence $D_n$ admits a poly-free series of length $n-1$, with
successive free quotients of ranks $1,2,\ldots,n-1$.
\end{proof}

We next determine the centre of $D_n$.

\begin{proposition}\label{prop-centre}
For $n\geq3$, the centre $Z(D_n)$ is trivial.
\end{proposition}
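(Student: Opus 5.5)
We argue by induction on the semidirect product decomposition $D_m=U_m\rtimes D_{m-1}$ of Proposition~\ref{pro1}. The claim we prove is slightly stronger: for every $m\geq 2$, the centre of $D_m$ is trivial when $m\geq3$. In addition, for $m\geq 2$ the centralizer of $U_m$ in $D_m$ is trivial. The base case is handled separately, since $D_2=U_2\cong\mathbb Z$ is abelian.

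The approach is to let $g\in Z(D_n)$ and write it by Proposition~\ref{pro1} as $g=u\,v$ with $u\in U_n$ and $v\in D_{n-1}$.

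\textbf{Step 1: $v$ acts trivially on $U_n$.} For every $w\in F_{n-1}$ we have $g\varphi_n(w)g^{-1}=\varphi_n(w)$. Since $U_n\cong F_{n-1}$ is free of rank $n-1\geq2$, we work out the conjugation action concretely. Note that $v\in D_{n-1}$ fixes $x_n$ and restricts to an automorphism $\bar v$ of $F_{n-1}$. A direct computation with Lemma~\ref{lem1}(1) gives
\[
v\,\varphi_n(w)\,v^{-1}=\varphi_n(\bar v(w)).
\]
Indeed, $v\varphi_n(w)v^{-1}$ sends $x_n\mapsto x_n\bar v(w)$ and fixes the other generators. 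Hence conjugation by $g$ on $U_n$ corresponds, under $\varphi_n$, to the automorphism $w\mapsto u'\,\bar v(w)\,u'^{-1}$ of $F_{n-1}$, where $u=\varphi_n(u')$. Centrality of $g$ therefore forces $\bar v$ to equal conjugation by $u'^{-1}$, an inner automorphism of $F_{n-1}$.

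\textbf{Step 2: $\bar v$ is trivial, and then $u'=1$.} An element of $D_{n-1}$ acts on $F_{n-1}^{\mathrm{ab}}$ by a unitriangular matrix, and inner automorphisms act trivially there, so $\pi(v)$ must be trivial. That alone is not enough, so we use the specific structure. Since $\bar v$ fixes $x_1$ (every element of $D_{n-1}$ fixes $x_1$), we get $u'^{-1}x_1u'=x_1$. In the free group this means $u'=x_1^k$ for some $k$. Thus $\bar v$ is conjugation by $x_1^{-k}$.

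Now $\bar v\in D_{n-1}$ sends each $x_t$ to $x_t\cdot(\text{word in }x_1,\ldots,x_{t-1})$, by an easy induction on word length using the generators $d_{t,j}$ and their inverses. Taking $x_2$, we need $x_1^{k}x_2x_1^{-k}=x_2\,w$ with $w\in F_1$. Reducing, this forces $k=0$. Hence $u'=1$ and $\bar v=\mathrm{id}$.

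Since restriction to $F_{n-1}$ identifies $D_{n-1}$ with a subgroup of $\operatorname{Aut}(F_{n-1})$, we get $v=1$. Therefore $g=u=1$.

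\textbf{Main obstacle.} The only delicate point is the claim that each element of $D_{n-1}$ maps $x_t$ to $x_t$ times a word in $x_1,\ldots,x_{t-1}$. This needs a check that the set of such automorphisms is closed under composition and inverses. The check works because these maps fix $F_{t-1}$ setwise and are triangular. A cleaner alternative route for Step 2 is to argue only with $x_2$: $\bar v(x_2)=x_2x_1^{a}$, by the semidirect decomposition $D_{n-1}$ restricted to $\langle x_1,x_2\rangle$. This is compared with $x_1^kx_2x_1^{-k}$, which has reduced form beginning with $x_1^k$, so $k=0$.

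Note that this argument does not actually need induction: the centre of $D_{n-1}$ never enters. It only uses $n-1\geq2$, which guarantees that $x_2$ exists in $F_{n-1}$; that holds since $n\geq3$.
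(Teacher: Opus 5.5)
Your proof is correct, and it takes a genuinely different route from the paper's.

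The paper argues by induction. It first treats $n=3$ by hand: it writes $z=uc^m$ with $u\in U_3=\langle a,b\rangle$, uses the centralizers of $b$ and of $a$ in the free group $U_3$, and uses the congruence $a^{c^m}\equiv ab^{-m}$ modulo $U_3'$. For the inductive step, a central $z\in D_n$ maps to a central element of $D_n/U_n\cong D_{n-1}$, so $z\in U_n$, and then $z\in Z(U_n)=\{1\}$.

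You avoid induction entirely. The identity $v\varphi_n(w)v^{-1}=\varphi_n(\bar v(w))$ identifies $U_n\rtimes D_{n-1}$ with $F_{n-1}\rtimes D_{n-1}$ under the tautological action. Hence an element $uv$ centralizing $U_n$ forces $\bar v$ to be inner. The triangular shape $\bar v(x_1)=x_1$, $\bar v(x_2)\in x_2\langle x_1\rangle$ then rules out every nontrivial inner automorphism.

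Because you only use that $g$ commutes with $U_n$, you actually prove the stronger statement $C_{D_n}(U_n)=\{1\}$ for $n\geq3$. Equivalently, conjugation on $U_n\cong F_{n-1}$ embeds $D_n$ in $\operatorname{Aut}(F_{n-1})$. The paper's inductive step uses centrality with respect to $D_{n-1}$ as well, through $Z(D_{n-1})=\{1\}$, although its $n=3$ base case, like your argument, only uses commutation with $a,b\in U_3$.

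Two harmless slips:
\begin{enumerate}
\item Your opening claim that $C_{D_m}(U_m)$ is trivial for $m\geq2$ fails at $m=2$, where $D_2=U_2\cong\mathbb Z$ is abelian. It should read $m\geq3$, and the announced induction is never used, as you note yourself.
\item From $\bar v(w)=u'^{-1}wu'$ with $u'=x_1^k$ you get $\bar v(x_2)=x_1^{-k}x_2x_1^{k}$, not $x_1^{k}x_2x_1^{-k}$. This does not affect the conclusion $k=0$.
\end{enumerate}
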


\begin{proof}
We first consider the case $n=3$. Write
$a=d_{3,2}$, $b=d_{3,1}$ and $c=d_{2,1}$. Then
$D_3=U_3\rtimes U_2$, where $U_3=\langle a,b\rangle$ and
$U_2=\langle c\rangle$. By Lemma~\ref{lem1}(2),
\[
(a,c)=b^{-1}
\qquad\text{and}\qquad
(b,c)=1.
\]
Equivalently,
\[
a^c=ab^{-1}
\qquad\text{and}\qquad
b^c=b.
\]
An induction on $|m|$ therefore gives
\[
a^{c^m}=ab^{-m}
\qquad(m\in\mathbb Z).
\]
Let $z\in Z(D_3)$. By Proposition~\ref{pro1}, we may write
$z=uc^m$, where $u\in U_3$ and $m\in\mathbb Z$. Since both $z$ and
$c^m$ commute with $b$, the element $u$ commutes with $b$. Since
$a,b$ form a free basis of $U_3$, the centralizer of $b$ in $U_3$
is $\langle b\rangle$. Hence $u=b^q$ for some $q\in\mathbb Z$. Since $z=b^qc^m$ commutes with $a$, we have
\[
a^{b^qc^m}=a.
\]
Moreover,
\[
a^{b^q}=b^{-q}ab^q\equiv a\pmod{U_3'}.
\]
Since $c$ normalizes $U_3$, it also normalizes $U_3'$. Therefore
\[
a=a^{b^qc^m}\equiv a^{c^m}=ab^{-m}\pmod{U_3'}.
\]
Thus $b^m\in U_3'$. Since $b$ is a free generator of $U_3$, its image
in the free abelian group $U_3/U_3'$ has infinite order. 
Hence $m=0$, and therefore $z=b^q$. Since $z$ commutes with $a$, so
does $b^q$. Since $a$ is a free generator of $U_3$, the centralizer of
$a$ in $U_3$ is $\langle a\rangle$. 
Hence
\[
b^q\in
\langle a\rangle\cap\langle b\rangle
=
\{1\},
\]
because $a,b$ form a free basis of $U_3$. Therefore $q=0$, and
hence $z=1$. Thus $Z(D_3)=\{1\}$.

We now argue by induction on $n$. Let $n\geq4$ and assume that
$Z(D_{n-1})=\{1\}$. Let $z\in Z(D_n)$. By
Proposition~\ref{pro1}, we have $D_n=U_n\rtimes D_{n-1}$. 
The image of $z$ in $D_n/U_n\cong D_{n-1}$ is central and therefore
lies in $Z(D_{n-1})$. By the induction hypothesis it is trivial, so
$z\in U_n$.
Since $z$ is central in $D_n$,
it lies in $Z(U_n)$. But $U_n$ is a free group of rank
$n-1\geq2$, so its centre is trivial. Therefore $z=1$.
\end{proof}

\section{Lower-central depths}
\label{sec-depth}

We begin by determining the exact lower-central depth of the
generators of $D_n$.

\begin{proposition}\label{prop-depth}
For $n\geq i>j\geq1$, the generator $d_{i,j}$ satisfies
\[
d_{i,j}
=
(d_{j+1,j},d_{j+2,j+1},\ldots,d_{i,i-1}),
\]
with the convention that when $i-j=1$, the displayed identity means
simply $d_{i,j}=d_{j+1,j}$. In particular,
\[
d_{i,j}\in
\gamma_{i-j}(D_n)\setminus\gamma_{i-j+1}(D_n).
\]
\end{proposition}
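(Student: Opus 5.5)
The identity comes from repeatedly applying Lemma~\ref{lem1}(2). The non-membership comes from mapping $D_n$ onto the unitriangular group $\Lambda_n$ via $\pi$, where the depth of $T_{i,j}=\pi(d_{i,j})$ can be read off from matrix entries.

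\emph{Step 1: the commutator identity.} Let $1\leq j<i-1$. Apply Lemma~\ref{lem1}(2) with indices $(i,i-1)$ and $(k,l)=(i-1,j)$; this is allowed since $1\leq j<i-1<i$. It gives $(d_{i,i-1},d_{i-1,j})=d_{i,j}^{-1}$. Because $(a,b)^{-1}=(b,a)$, this is equivalent to
\[
d_{i,j}=(d_{i-1,j},d_{i,i-1}).
\]
Now induct on $i-j$, with the case $i-j=1$ being the stated convention. Substituting the inductive expression $d_{i-1,j}=(d_{j+1,j},\ldots,d_{i-1,i-2})$ and using the left-normed convention gives
\[
d_{i,j}=\bigl((d_{j+1,j},\ldots,d_{i-1,i-2}),d_{i,i-1}\bigr)=(d_{j+1,j},\ldots,d_{i,i-1}).
\]
A left-normed commutator of $i-j$ elements of $D_n$ lies in $\gamma_{i-j}(D_n)$, which gives $d_{i,j}\in\gamma_{i-j}(D_n)$.

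\emph{Step 2: an $N$-series on $\Lambda_n$.} Let $N$ be the additive group of strictly lower triangular integer matrices. For $c\geq1$ put
\[
G_c=\{I+A \mid A\in N,\ A_{p,q}=0 \text{ whenever } p-q<c\}.
\]
The identities $(I+A)(I+B)=I+A+B+AB$ and $(I+A)^{-1}=I-A+A^2-\cdots$ (a finite sum, since $A$ is nilpotent) show that each $G_c$ is a normal subgroup of $G_1$. Moreover, if $I+A\in G_a$ and $I+B\in G_b$, then $(I+A,I+B)\equiv I+AB-BA$ modulo terms supported on diagonals of depth at least $a+b$. Hence $(G_a,G_b)\subseteq G_{a+b}$, so $\{G_c\}$ is an $N$-series of $G_1\supseteq\Lambda_n$.

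A standard induction then gives $\gamma_c(\Lambda_n)\subseteq\gamma_c(G_1)\subseteq G_c$. Since $\pi$ is a homomorphism, $\pi(\gamma_c(D_n))=\gamma_c(\Lambda_n)\subseteq G_c$.

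\emph{Step 3: conclusion.} We have $T_{i,j}=I-M_{i,j}$, whose only nonzero off-diagonal entry is $-1$ in position $(i,j)$, on the diagonal $p-q=i-j$. Therefore $T_{i,j}\notin G_{i-j+1}$. If $d_{i,j}$ were in $\gamma_{i-j+1}(D_n)$, then $T_{i,j}=\pi(d_{i,j})$ would lie in $G_{i-j+1}$, a contradiction. So $d_{i,j}\notin\gamma_{i-j+1}(D_n)$.

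The only real obstacle is the commutator estimate in Step~2. I would verify it by writing
\[
(I+A)^{-1}(I+B)^{-1}(I+A)(I+B)=I+(I+A)^{-1}(I+B)^{-1}(AB-BA).
\]
The right-hand side is supported on diagonals of depth at least $a+b$, because products of matrices supported on depths at least $s$ and $t$ are supported on depths at least $s+t$.
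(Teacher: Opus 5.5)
Your proof is correct. Step~1 is the paper's induction via Lemma~\ref{lem1}(2), but the non-membership argument takes a different route.

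The paper invokes Magnus's description of the lower central series of the unitriangular group. By that theorem, $\gamma_s(\Lambda_n)$ is generated by the $T_{k,l}$ with $k-l\geq s$, and the images of the $T_{k,l}$ with $k-l=s$ form a basis of $\gamma_s(\Lambda_n)/\gamma_{s+1}(\Lambda_n)$. The paper then reads off $T_{i,j}\notin\gamma_{i-j+1}(\Lambda_n)$ from that basis statement.

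You instead prove directly the only inclusion actually needed, $\gamma_s(\Lambda_n)\subseteq G_s$. You do this with the diagonal-depth $N$-series and the identity $(I+A)^{-1}(I+B)^{-1}(I+A)(I+B)=I+(I+A)^{-1}(I+B)^{-1}(AB-BA)$. After that, $T_{i,j}\notin G_{i-j+1}$ is visible from a single matrix entry.

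What your version buys is self-containment. It is essentially the easy half of Magnus's theorem. It needs only $\Lambda_n\subseteq G_1$, whereas the paper implicitly uses that $\Lambda_n$ is the full lower unitriangular group, via $I-M_{k,l}=(I+M_{k,l})^{-1}$.

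What the paper's citation buys is more information: the exact terms $\gamma_s(\Lambda_n)$ and explicit bases of their factors. None of this extra information is used in this proposition.
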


\begin{proof}
We argue by induction on $i-j$. If $i-j=1$, the required identity is
immediate. Assume that $i-j>1$. The induction hypothesis gives
\[
d_{i-1,j}
=
(d_{j+1,j},d_{j+2,j+1},\ldots,d_{i-1,i-2}).
\]
By Lemma~\ref{lem1}(2), $(d_{i,i-1},d_{i-1,j})=d_{i,j}^{-1}$ 
and therefore $(d_{i-1,j},d_{i,i-1})=d_{i,j}$.
Combining this with the induction hypothesis, we obtain
\[
d_{i,j}
=
(d_{j+1,j},d_{j+2,j+1},\ldots,
d_{i-1,i-2},d_{i,i-1}).
\]
The right-hand side is a commutator of length $i-j$.
Hence $d_{i,j}\in\gamma_{i-j}(D_n)$.

It remains to prove that
$d_{i,j}\notin\gamma_{i-j+1}(D_n)$. For $1\leq l<k\leq n$, recall that, 
under the identification fixed above,
\[
T_{k,l}=\pi(d_{k,l})=I-M_{k,l}\in\Lambda_n.
\]
Since $M_{k,l}^2=0$, we have
$I-M_{k,l}=(I+M_{k,l})^{-1}$.
Hence Magnus's description of the lower central series of the
unitriangular group implies that, for every $s\geq1$,
\[
\gamma_s(\Lambda_n)
=
\langle T_{k,l}\mid
1\leq l<k\leq n,\ k-l\geq s\rangle.
\]
Moreover, the images of the elements $T_{k,l}$, with
$1\leq l<k\leq n$ and $k-l=s$, form a basis of the free abelian
group $\gamma_s(\Lambda_n)/\gamma_{s+1}(\Lambda_n)$;
see \cite[p.~361]{mag}. Consequently,
\[
T_{i,j}\in
\gamma_{i-j}(\Lambda_n)
\setminus
\gamma_{i-j+1}(\Lambda_n).
\]
Since homomorphisms preserve lower-central terms,
$\pi(\gamma_s(D_n))\subseteq\gamma_s(\Lambda_n)$ for
$s\geq1$.
If $d_{i,j}\in\gamma_{i-j+1}(D_n)$, then
$T_{i,j}
=
\pi(d_{i,j})
\in
\gamma_{i-j+1}(\Lambda_n)$,
which is a contradiction. Therefore
$d_{i,j}\notin\gamma_{i-j+1}(D_n)$.
\end{proof}

Thus the generators of each $U_i$ in the semidirect product
decomposition occur at different depths in the lower central series
of $D_n$. These depths determine the weights used below.

\begin{remark}
\upshape
In \cite[Remark~1]{rom}, it is stated that, for $D_4^\ell$, the
subgroup generated by
$e_{3,1},e_{3,2},e_{4,1},e_{4,2},e_{4,3}$ coincides with the
derived subgroup $(D_4^\ell)'$. Proposition~\ref{prop-depth} shows
that this identification does not hold. Indeed, since $D_4^\ell$ is
conjugate to $D_4$, Proposition~\ref{prop-depth} gives
$e_{3,2},e_{4,3}\in
D_4^\ell\setminus\gamma_2(D_4^\ell)$. Since both elements belong
to the subgroup above, whereas
$(D_4^\ell)'=\gamma_2(D_4^\ell)$, the two subgroups cannot coincide. (The asserted identification is made only in
Remark~1 of \cite{rom}, after the proof of the preceding theorem,
and is not used in that proof.)
\end{remark}

\section{Weighted commutator filtrations}
\label{sec-weighted-general}

\subsection{Basic commutators and collection}

Let $r\geq1$, and let $F_r$ be the free group freely generated by
the ordered set $X=\{x_1,\ldots,x_r\}$. We use P.~Hall's basic commutators, 
their ordering, and the collecting process described
in \cite[Chapter~11, Sections~11.1--11.2]{hall}.

For every $N\geq1$, Hall collection terminates modulo
$\gamma_{N+1}(F_r)$. Let $c_1<\cdots<c_t$ be all the basic
commutators of Hall weight at most $N$. For every $g\in F_r$, Hall's Basis Theorem
\cite[Theorem~11.2.4]{hall} gives unique integers
$e_1,\ldots,e_t$ such that
\[
g\equiv c_1^{e_1}\cdots c_t^{e_t}
\pmod{\gamma_{N+1}(F_r)}.
\]
We call the product on the right the \emph{collected expression} of
$g$ modulo $\gamma_{N+1}(F_r)$.

\subsection{Weighted filtrations of the groups $U_i$}

For $i\in\{2,\ldots,n\}$, Lemma~\ref{lem1}(1) shows that
$U_i$ is freely generated by $d_{i,1},\ldots,d_{i,i-1}$. We order
this free basis by
\[
d_{i,i-1}<d_{i,i-2}<\cdots<d_{i,1}.
\]
Applying Proposition~\ref{prop-depth} with $n=i$, we have
\[
d_{i,r}\in
\gamma_{i-r}(D_i)\setminus\gamma_{i-r+1}(D_i),
\qquad
1\leq r<i.
\]
We call $i-r$ the assigned weight of $d_{i,r}$. This should not be
confused with Hall weight: every free generator has Hall weight one,
whereas its assigned weight here is $i-r$.

A \emph{commutator expression} in the free generators of $U_i$ is
obtained recursively from these generators by inversion and the
commutator operation. Thus the generators are commutator expressions,
and if $u$ and $v$ are commutator expressions, then so are $u^{-1}$
and $(u,v)$. The ordinary degree and the weighted degree of a 
commutator expression are defined recursively by
\[
\begin{aligned}
\ell(d_{i,r})&=1,
&
\operatorname{wt}(d_{i,r})&=i-r,\\
\ell(u^{-1})&=\ell(u),
&
\operatorname{wt}(u^{-1})&=\operatorname{wt}(u),\\
\ell((u,v))&=\ell(u)+\ell(v),
&
\operatorname{wt}((u,v))
&=\operatorname{wt}(u)+\operatorname{wt}(v).
\end{aligned}
\]
For a basic commutator, its ordinary degree is its Hall weight. Since
$1\leq\operatorname{wt}(d_{i,r})\leq i-1$,
every commutator expression $u$ satisfies
\[
\ell(u)\leq\operatorname{wt}(u)\leq(i-1)\ell(u).
\]
In particular, a basic commutator of weighted degree $c$ has Hall
weight at most $c$. Since $U_i$ has finite rank, there are only
finitely many such basic commutators.

Every commutator expression $u$ satisfies
\[
u\in\gamma_{\ell(u)}(U_i)
\qquad\text{and}\qquad
u\in\gamma_{\operatorname{wt}(u)}(D_i).
\]
The first inclusion follows from the definition of the lower central
series. The second inclusion holds for the generators by
Proposition~\ref{prop-depth} and is preserved under inversion. 
For a commutator expression $(u,v)$, it follows from 
\[
(\gamma_a(D_i),\gamma_b(D_i))\subseteq\gamma_{a+b}(D_i).
\]

Let $u$ and $v$ be basic commutators, and let
$\varepsilon,\delta\in\{\pm1\}$. When $v^\varepsilon$ and
$u^\delta$ are interchanged during Hall collection, every new
commutator is an iterated commutator in $u$ and $v$ in which both
occur; see \cite[pp.~165--167]{hall}. If such a
commutator contains $u$ exactly $p$ times and $v$ exactly $q$ times,
where $p,q\geq1$, then its weighted degree is
\[
p\operatorname{wt}(u)+q\operatorname{wt}(v)
\geq
\operatorname{wt}(u)+\operatorname{wt}(v).
\]

Fix $N\geq1$. We prove by induction on the construction of
commutator expressions that every basic commutator in the collected
expression of a commutator expression $a$ modulo
$\gamma_{N+1}(U_i)$ has weighted degree at least
$\operatorname{wt}(a)$. For a commutator expression $u$, denote its
collected expression modulo $\gamma_{N+1}(U_i)$ by $\widetilde u$.

If $a$ is a free generator, the assertion is immediate. Suppose
first that $a=u^{-1}$ and that the assertion holds for $u$. Since
\[
u\equiv\widetilde u
\pmod{\gamma_{N+1}(U_i)},
\]
we have
\[
a=u^{-1}\equiv\widetilde u^{-1}
\pmod{\gamma_{N+1}(U_i)}.
\]
By the induction hypothesis, every basic commutator occurring in
$\widetilde u$ has weighted degree at least
$\operatorname{wt}(u)=\operatorname{wt}(a)$. Inverting
$\widetilde u$ reverses the order of its factors and replaces them
by their inverses, without changing their weighted degrees. When
the resulting product is collected, every new factor is an iterated
commutator in factors already present. The preceding observation
about Hall collection therefore shows that no basic commutator of
weighted degree less than $\operatorname{wt}(a)$ is introduced.

Now suppose that $a=(u,v)$ and that the assertion holds for $u$ and
$v$. Since
\[
u\equiv\widetilde u
\qquad\text{and}\qquad
v\equiv\widetilde v
\pmod{\gamma_{N+1}(U_i)},
\]
the standard commutator identities, together with
\[
(\gamma_{N+1}(U_i),U_i)
\subseteq\gamma_{N+2}(U_i)
\subseteq\gamma_{N+1}(U_i),
\]
give
\[
(u,v)\equiv(\widetilde u,\widetilde v)
\pmod{\gamma_{N+1}(U_i)}.
\]
If either $\widetilde u$ or $\widetilde v$ is trivial, the assertion
is immediate. Otherwise, after writing their powers as products of
copies of basic commutators or their inverses, repeated application
of the standard commutator identities expresses
$(\widetilde u,\widetilde v)$ as a product of conjugates of iterated
commutators containing at least one factor from $\widetilde u$ and
at least one factor from $\widetilde v$. By the induction
hypothesis, these factors have weighted degrees at least
$\operatorname{wt}(u)$ and $\operatorname{wt}(v)$, respectively.
Hence every such iterated commutator has weighted degree at least
\[
\operatorname{wt}(u)+\operatorname{wt}(v)=\operatorname{wt}(a).
\]
Conjugation and further collection introduce only commutators in
factors already present and therefore cannot introduce a basic
commutator of smaller weighted degree. This completes the induction.

\begin{lemma}\label{weightedcollection}
Let $i\in\{2,\ldots,n\}$ and let $\rho,N\geq1$. Suppose that
$w=a_1^{e_1}\cdots a_t^{e_t}$,
where $t\geq1$ and, for $j=1,\ldots,t$,
$e_j\in\mathbb Z\setminus\{0\}$ and $a_j$ is a commutator
expression in the generators of $U_i$ of weighted degree at least
$\rho$. Then every basic commutator appearing in the collected
expression of $w$ modulo $\gamma_{N+1}(U_i)$ has weighted degree at
least $\rho$.
\end{lemma}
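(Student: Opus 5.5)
The plan is to reduce Lemma~\ref{weightedcollection} to the induction carried out just before its statement, together with the observation about Hall collection. That induction shows that, for each commutator expression $a_j$, every basic commutator in the collected expression $\widetilde a_j$ of $a_j$ modulo $\gamma_{N+1}(U_i)$ has weighted degree at least $\operatorname{wt}(a_j)\geq\rho$.

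First, I would replace each $a_j$ by $\widetilde a_j$. Since $\gamma_{N+1}(U_i)$ is normal in $U_i$, congruence modulo it is compatible with products and powers. Hence
\[
w\equiv \widetilde a_1^{\,e_1}\cdots\widetilde a_t^{\,e_t}\pmod{\gamma_{N+1}(U_i)}.
\]
Writing each power $\widetilde a_j^{\,e_j}$ as $|e_j|$ copies of $\widetilde a_j$ or of $\widetilde a_j^{-1}$, and expanding $\widetilde a_j^{-1}$ as the reversed product of inverses of its basic factors, I express $w$ modulo $\gamma_{N+1}(U_i)$ as a finite word. Every letter of this word is $c^{\pm1}$ for a basic commutator $c$ with $\operatorname{wt}(c)\geq\rho$.

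Next, I would collect this word by Hall's collection process modulo $\gamma_{N+1}(U_i)$. The process terminates, and by the uniqueness part of Hall's Basis Theorem \cite[Theorem~11.2.4]{hall} its output is the collected expression of $w$. At each step of collection an adjacent pair $v^{\varepsilon}u^{\delta}$ is interchanged, and the new factors are iterated commutators in $u$ and $v$ in which both occur \cite[pp.~165--167]{hall}. By the weighted-degree inequality recorded above, such a factor has weighted degree at least $\operatorname{wt}(u)+\operatorname{wt}(v)$, which exceeds $\min(\operatorname{wt}(u),\operatorname{wt}(v))$. An induction on the steps of the collection therefore shows that every letter present at each stage has weighted degree at least $\rho$: the initial word satisfies this, and each step introduces only letters of weighted degree at least the sum of two weighted degrees that are each at least $\rho$. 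Letters lying in $\gamma_{N+1}(U_i)$, namely those of Hall weight greater than $N$, are discarded, and this does not affect the claim. So every basic commutator in the final collected expression has weighted degree at least $\rho$.

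The main obstacle is making the collection step rigorous. The new factors produced by an interchange are iterated commutators in basic commutators, but they need not themselves be basic. They must be rewritten in terms of basic commutators, and I must check that this rewriting also never lowers the weighted degree. I would handle this with Hall's standard description of collection: the factors that arise are always commutators of earlier factors, which are recorded as new ``letters'' that are themselves basic with respect to the extended ordering. Weighted degree is additive on these formal commutators, so the bound propagates. Equivalently, I could apply the already-proved statement for commutator expressions to each new factor, since a commutator of expressions of weighted degree at least $\rho$ is again a commutator expression of weighted degree at least $\rho$. This is exactly the observation used in the preceding induction, so the lemma follows.
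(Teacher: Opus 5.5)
Your proposal is correct and is essentially the paper's proof. Like the paper, you use the preceding induction to collect each $a_j^{\pm1}$ into basic commutators of weighted degree at least $\rho$, and then observe that collecting the product introduces only iterated commutators in factors already present, whose weighted degree is at least the sum of theirs. The paper also passes over the point you flag about non-basic new factors, and of your two remedies the second one works: each new factor $(y,c^{\pm1})^{\pm1}$ is a commutator expression of weighted degree at least $2\rho$ and Hall weight greater than that of $c$, so it can be re-collected by the earlier result without destroying termination modulo $\gamma_{N+1}(U_i)$. The first remedy is not accurate here, since Hall's guarantee that the arising commutators are basic applies to words in the free generators, not to words whose letters are basic commutators in arbitrary order.
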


\begin{proof}
Write each power as a product of copies of $a_j$ or $a_j^{-1}$.
The preceding collection argument shows that each of these
commutator expressions collects into basic commutators of weighted
degree at least $\rho$. Collecting the resulting product introduces
only iterated commutators in factors already present and therefore
cannot introduce a basic commutator of weighted degree less than
$\rho$.
\end{proof}

For $i=2,\ldots,n$ and $c\geq1$, let $W_{i,c}$ be the
subgroup of $U_i$ generated by the commutator expressions of
weighted degree at least $c$. The
inclusions proved above give
\begin{equation}\label{eq-weighted-in-lcs}
W_{i,c}\subseteq U_i\cap\gamma_c(D_i).
\end{equation}

\begin{lemma}\label{weightedhall}
For every $i\in\{2,\ldots,n\}$, the following statements hold.
\begin{enumerate}
\item The sequence $\{W_{i,c}\}_{c\geq1}$ is an $N$-series of $U_i$.

\item For every $c\geq1$, the group $W_{i,c}/W_{i,c+1}$ is a
finitely generated free abelian group with basis given by the images
of the basic commutators of weighted degree exactly $c$.

\item $\bigcap_{c\geq1}W_{i,c}=\{1\}$.
\end{enumerate}
\end{lemma}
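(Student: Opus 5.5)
The plan is to identify $\{W_{i,c}\}$ with the standard weighted lower central series of a free group. Via $\varphi_i$ of Lemma~\ref{lem1}(1), $U_i$ is free on $d_{i,1},\ldots,d_{i,i-1}$, and we give $d_{i,r}$ the weight $i-r$. The natural candidate is the filtration $G_c$ obtained by letting $G_c$ be generated by all commutator expressions of weighted degree at least $c$. This is exactly $W_{i,c}$, so the work is to extract its properties from Hall's basis theorem together with Lemma~\ref{weightedcollection}.

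For (1), each $W_{i,c}$ is normal in $U_i$. Indeed, for a commutator expression $u$ of weighted degree at least $c$ and a generator $g$, we have $u^{g^{\pm1}}=u(u,g^{\pm1})$, and $(u,g^{\pm1})$ is a commutator expression of weighted degree at least $c+1$. The chain is descending by definition, and $W_{i,1}=U_i$ because the generators have weight at least $1$. For $(W_{i,a},W_{i,b})\subseteq W_{i,a+b}$, take generators $u$ of $W_{i,a}$ and $v$ of $W_{i,b}$; then $(u,v)$ is a commutator expression of weight at least $a+b$. The usual identities $(xy,z)=(x,z)^y(y,z)$ and $(x,yz)=(x,z)(x,y)^z$, combined with normality of $W_{i,a+b}$, extend this from generators to the full subgroups.

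For (2) and (3), the key claim is
\[
W_{i,c}\,\gamma_{N+1}(U_i)=\bigl\langle\text{basic commutators of weighted degree}\geq c\bigr\rangle\,\gamma_{N+1}(U_i)
\]
for every $N$. The inclusion $\supseteq$ is immediate, since basic commutators are commutator expressions. For $\subseteq$, apply Lemma~\ref{weightedcollection} with $\rho=c$ to any product of generators of $W_{i,c}$: its collected expression modulo $\gamma_{N+1}(U_i)$ involves only basic commutators of weighted degree at least $c$.

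Next choose $N\geq c$. By the uniqueness in Hall's basis theorem, an element of $W_{i,c}$ has a collected expression modulo $\gamma_{N+1}(U_i)$ in which every exponent of a basic commutator of weighted degree less than $c$ is zero. To show that the images of the basic commutators of weighted degree exactly $c$ are independent modulo $W_{i,c+1}$, suppose $\prod b_k^{m_k}\in W_{i,c+1}$ with the $b_k$ of weighted degree $c$, taken in Hall order. This product is already collected, because it involves only basic commutators. Hall uniqueness modulo $\gamma_{N+1}(U_i)$, with $N\geq c$ so that all these $b_k$ have Hall weight at most $N$, compared against the collected form of an element of $W_{i,c+1}$, forces every $m_k=0$.

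These images also span. Modulo $\gamma_{N+1}(U_i)$ with $N$ large, an element of $W_{i,c}$ is a product of basic commutators of weighted degree at least $c$. Those of weighted degree greater than $c$ lie in $W_{i,c+1}$. The tail lies in $\gamma_{N+1}(U_i)$, and $\gamma_{N+1}(U_i)\subseteq W_{i,N+1}\subseteq W_{i,c+1}$ because $\operatorname{wt}\geq\ell$. Abelianness of $W_{i,c}/W_{i,c+1}$ follows from (1), and finite generation from the finiteness of the basic commutators of weighted degree $c$.

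For (3), note that $\operatorname{wt}(u)\leq(i-1)\ell(u)$. Hence every basic commutator of weighted degree at least $c$ lies in $\gamma_{\lceil c/(i-1)\rceil}(U_i)$, and $W_{i,c}\subseteq\gamma_{\lceil c/(i-1)\rceil}(U_i)$ by the displayed equality with $N$ chosen appropriately. Residual nilpotence of free groups then gives $\bigcap_c W_{i,c}=\{1\}$.

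The main obstacle is the independence step in (2). One must check carefully that membership in $W_{i,c+1}$ really forces, via the collected form modulo a suitable $\gamma_{N+1}$, the vanishing of the exponents at weighted degree $c$. This rests on the inductive collection argument preceding Lemma~\ref{weightedcollection} and on Hall uniqueness, and it is the place where the collection argument must be trusted.
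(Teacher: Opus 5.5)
Your proposal is correct and follows essentially the same route as the paper: normality and the $N$-series property come from the commutator identities, spanning and independence of the weight-$c$ basic commutators come from Lemma~\ref{weightedcollection} together with uniqueness in Hall's basis theorem, and (3) comes from $\operatorname{wt}\leq(i-1)\ell$ plus residual nilpotence of free groups. The only cosmetic difference is in (3): the paper applies $\ell(u)\geq\lceil c/(i-1)\rceil$ directly to the generating commutator expressions of $W_{i,c}$, so it does not need your displayed equality there.
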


\begin{proof}
Fix $i\in\{2,\ldots,n\}$.

\begin{enumerate}
\item Let $c\geq1$. If $u$ is a commutator expression of weighted
degree at least $c$, then
\[
\operatorname{wt}((u,d_{i,r}^{\varepsilon}))=
\operatorname{wt}(u)+\operatorname{wt}(d_{i,r})
\geq c+1
\]
for every $1\leq r<i$ and $\varepsilon\in\{\pm1\}$. Hence
$u^{d_{i,r}^{\varepsilon}}=
u(u,d_{i,r}^{\varepsilon})\in W_{i,c}$.
Since the elements $d_{i,r}$ generate $U_i$, the subgroup
$W_{i,c}$ is normal in $U_i$.

Let $a,b\geq1$, and let $u$ and $v$ be commutator expressions of
weighted degrees at least $a$ and $b$, respectively. Then
$(u,v)\in W_{i,a+b}$. Since $W_{i,a+b}$ is normal in $U_i$, 
the standard commutator identities from Section~\ref{sec-prelim} give
$(W_{i,a},W_{i,b})\subseteq W_{i,a+b}$.
Since $W_{i,1}=U_i$ and $W_{i,c+1}\subseteq W_{i,c}$ for every
$c\geq1$, the sequence $\{W_{i,c}\}_{c\geq1}$ is an $N$-series of
$U_i$.

\item Let $c\geq1$. Since $\{W_{i,c}\}_{c\geq1}$ is an $N$-series,
$\gamma_{c+1}(U_i)\subseteq W_{i,c+1}$.
Moreover, $(W_{i,c},W_{i,c})\subseteq W_{i,2c}\subseteq W_{i,c+1}$,
so $W_{i,c}/W_{i,c+1}$ is abelian. 
Let $\mathcal B_{i,c}$ be the finite set of basic commutators of
weighted degree exactly $c$. Let $u$ be a commutator expression of
weighted degree at least $c$. If
$\operatorname{wt}(u)\geq c+1$, then $u\in W_{i,c+1}$. Suppose that
$\operatorname{wt}(u)=c$. By Lemma~\ref{weightedcollection},
applied with $N=c$, the collected expression of $u$ modulo
$\gamma_{c+1}(U_i)$ contains only basic commutators of weighted
degree at least $c$. Since $\gamma_{c+1}(U_i)\subseteq W_{i,c+1}$ 
and every basic commutator of weighted degree at least $c+1$ 
belongs to $W_{i,c+1}$, the image of $u$ in $W_{i,c}/W_{i,c+1}$ 
lies in the subgroup generated by the images of the elements of $\mathcal B_{i,c}$.
Hence these images generate $W_{i,c}/W_{i,c+1}$.

To prove linear independence, let $b_1<\cdots<b_t$ be distinct elements of
$\mathcal B_{i,c}$, and suppose that
$b_1^{e_1}\cdots b_t^{e_t}\in W_{i,c+1}$, where
$e_1,\ldots,e_t\in\mathbb Z$.
Since each $b_j$ has Hall weight at most $c$, the product on the left
is already in collected form modulo $\gamma_{c+1}(U_i)$. Since it
belongs to $W_{i,c+1}$, it can also be written as a product of powers
of commutator expressions of weighted degree at least $c+1$.
Lemma~\ref{weightedcollection}, applied with $\rho=c+1$ and $N=c$,
shows that its collected expression modulo $\gamma_{c+1}(U_i)$
contains only basic commutators of weighted degree at least $c+1$.
The uniqueness of collected expressions therefore gives
\[
e_1=\cdots=e_t=0.
\]
Thus the images of the elements of $\mathcal B_{i,c}$ form a finite
basis of $W_{i,c}/W_{i,c+1}$. In particular, this quotient is a
finitely generated free abelian group.

\item Let $c\geq1$, and let $u$ be a commutator expression of
weighted degree at least $c$. Since
\[
c\leq\operatorname{wt}(u)\leq(i-1)\ell(u),
\]
we have
\[
\ell(u)\geq
\left\lceil\frac{c}{i-1}\right\rceil.
\]
As observed above, $u\in\gamma_{\ell(u)}(U_i)$. Since the lower
central series is descending, it follows that
\[
u\in\gamma_{\ell(u)}(U_i)
\subseteq
\gamma_{\left\lceil c/(i-1)\right\rceil}(U_i).
\]
The commutator expressions of weighted degree at least $c$ generate
$W_{i,c}$, and therefore
\[
W_{i,c}
\subseteq
\gamma_{\left\lceil c/(i-1)\right\rceil}(U_i).
\]

Now let
\[
g\in\bigcap_{d\geq1}W_{i,d}.
\]
For every $k\geq1$, taking $d=(i-1)k$ gives
\[
g\in W_{i,(i-1)k}\subseteq\gamma_k(U_i).
\]
Hence
\[
g\in\bigcap_{k\geq1}\gamma_k(U_i).
\]
Since $U_i$ is a free group, it is residually nilpotent, and thus
\[
\bigcap_{k\geq1}\gamma_k(U_i)=\{1\}.
\]
Therefore $g=1$, and consequently
\[
\bigcap_{c\geq1}W_{i,c}=\{1\}.
\]
\end{enumerate}
\end{proof}

Let $i\in\{2,\ldots,n\}$, and let $b$ be a basic commutator in the
free generators of $U_i$ of weighted degree $c\geq1$.
By Lemma~\ref{weightedhall}(2), $b\in
W_{i,c}\setminus W_{i,c+1}$, and by
equation~\eqref{eq-weighted-in-lcs},
$b\in\gamma_c(D_i)$.
The equality $W_{i,c+1}=U_i\cap\gamma_{c+1}(D_i)$,
proved in the next section, will show that
$b\notin\gamma_{c+1}(D_i)$; see
Corollary~\ref{cor-exact-weight} below.

\section{Weighted filtrations and the lower central series of $D_n$}
\label{sec-weighted}

Let $K$ act on a group $H$. The relative lower central filtration
$\{\gamma_c^K(H)\}_{c\geq1}$ is defined by
$\gamma_1^K(H)=H$ and
\[
\gamma_{c+1}^K(H)
=
(\gamma_c^K(H),H\rtimes K)
\qquad(c\geq1);
\]
see \cite[Proposition-Definition~1.29]{darsu}. By
\cite[Lemma~1.31]{darsu}, this is the smallest $N$-series of $H$ on
which the lower central series of $K$ acts. Explicitly, an
$N$-series $\{H_c\}_{c\geq1}$ has this property if
$(\gamma_a(K),H_b)\subseteq H_{a+b}$ for all $a,b\geq1$.

The filtration defined recursively by Guaschi and Pereiro in
\cite[Theorem~1.1]{guaschipereiro} is precisely
$\{\gamma_c^K(H)\}_{c\geq1}$; see
\cite[Remark~1.32]{darsu}. Consequently,
\[
\gamma_c(H\rtimes K)
=
\gamma_c^K(H)\rtimes\gamma_c(K)
\qquad(c\geq1);
\]
see \cite[Theorem~1.1]{guaschipereiro} and
\cite[Proposition~1.33]{darsu}.

By Proposition~\ref{pro1}, for every $i\in\{3,\ldots,n\}$,
$D_i=U_i\rtimes D_{i-1}$. Thus $D_{i-1}$ acts on $U_i$ by conjugation. 
The next proposition shows that, for this action, the relative lower
central filtration coincides with the weighted filtration
$\{W_{i,c}\}_{c\geq1}$.

\begin{proposition}\label{prop-weighted-intersection}
The following statements hold.
\begin{enumerate}
\item For every $i\in\{3,\ldots,n\}$ and every $c\geq1$,
\[
W_{i,c}=\gamma_c^{D_{i-1}}(U_i)=U_i\cap\gamma_c(D_i).
\]

\item For every $c\geq1$,
\[
\gamma_c(D_n)=
W_{n,c}\rtimes
\bigl(
W_{n-1,c}\rtimes
(\cdots\rtimes(W_{3,c}\rtimes W_{2,c})\cdots)
\bigr),
\]
and, for every $i\in\{2,\ldots,n\}$,
\[
U_i\cap\gamma_c(D_n)=W_{i,c}.
\]
\end{enumerate}
\end{proposition}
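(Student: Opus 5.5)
We argue by induction on $i$, and for fixed $i$ we establish the chain
\[
\gamma_c^{D_{i-1}}(U_i)\subseteq W_{i,c}\subseteq U_i\cap\gamma_c(D_i)=\gamma_c^{D_{i-1}}(U_i).
\]
The middle inclusion is \eqref{eq-weighted-in-lcs}. The last equality is the Guaschi--Pereiro/Darn\'e formula $\gamma_c(U_i\rtimes D_{i-1})=\gamma_c^{D_{i-1}}(U_i)\rtimes\gamma_c(D_{i-1})$, intersected with $U_i$. So the whole of part (1) reduces to the first inclusion.

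By \cite[Lemma~1.31]{darsu}, $\gamma_c^{D_{i-1}}(U_i)$ is the smallest $N$-series of $U_i$ on which the lower central series of $D_{i-1}$ acts. Since Lemma~\ref{weightedhall}(1) already shows that $\{W_{i,c}\}$ is an $N$-series of $U_i$, it suffices to prove
\[
(\gamma_a(D_{i-1}),W_{i,b})\subseteq W_{i,a+b}\qquad(a,b\geq1).
\]

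This is the main obstacle, and it needs the inductive hypothesis on the structure of $\gamma_a(D_{i-1})$. Assume that part (2) holds for $D_{i-1}$, i.e.\ $\gamma_a(D_{i-1})$ is generated by $W_{k,a}$ for $2\leq k\leq i-1$. The base case is $D_2=U_2=\langle d_{2,1}\rangle$, where $W_{2,1}=U_2$ and $W_{2,a}=1$ for $a\geq2$. It is then enough to show that each generating commutator expression $g$ of $W_{k,a}$ satisfies $(W_{i,b},g)\subseteq W_{i,a+b}$. One should not argue generator by generator, since $\gamma_a(D_{i-1})$ is generated by the elements of weight $\geq a$ and not by powers of weight-one elements. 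The cleaner route is to introduce a filtration on all of $D_i$: let $\Gamma_c$ be generated by $W_{k,c}$ for $k\leq i$, and show directly that it is an $N$-series of $D_i$ containing each $d_{k,l}$ in $\Gamma_{k-l}$.

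The key computation is the action of a single generator. For $1\leq l<k<i$, the map $u\mapsto u^{d_{k,l}}$ is the automorphism of the free group $U_i$ given by
\[
d_{i,k}\mapsto d_{i,k}d_{i,l}^{-1},
\]
fixing the other generators. Since $\operatorname{wt}(d_{i,l})=\operatorname{wt}(d_{i,k})+(k-l)$, this automorphism raises weight by exactly $k-l$ on generators: $(d_{i,r},d_{k,l})$ is $1$ or $d_{i,l}^{-1}$, of weight $\operatorname{wt}(d_{i,r})+(k-l)$. Using the identities $(xy,z)=(x,z)^y(y,z)$ and induction on commutator expressions, we get $(W_{i,b},d_{k,l}^{\pm1})\subseteq W_{i,b+k-l}$. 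To see this, write $u^{\alpha}$ with $\alpha=d_{k,l}$. Then $\alpha$ preserves each $W_{i,b}$ and induces the identity on $W_{i,b}/W_{i,b+k-l}$, because it is a filtered automorphism acting trivially on the graded pieces in degree shift $k-l$. Here we use Lemma~\ref{weightedhall}(2), that the graded object is the free Lie ring generated in degrees $i-r$.

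With this, $\{\Gamma_c\}$ together with the filtration $\{\Gamma_c\cap D_{i-1}\}$ forms an $N$-series of $D_i$ by the three subgroup lemma, with $d_{k,l}\in\Gamma_{k-l}$. Hence the generators lie in $\Gamma_{k-l}$. Since $\{\Gamma_c\}$ is an $N$-series containing them, an induction on $c$ gives $\gamma_c(D_i)\subseteq\Gamma_c$. Intersecting with $U_i$ and using uniqueness of the semidirect decomposition, which is normal form by Proposition~\ref{pro1}, yields
\[
U_i\cap\gamma_c(D_i)\subseteq W_{i,c}.
\]
This gives (1) without even needing the minimality statement.

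For (2), iterate $\gamma_c(U_i\rtimes D_{i-1})=W_{i,c}\rtimes\gamma_c(D_{i-1})$ for $i=n,\dots,3$, using $\gamma_c(D_2)=W_{2,c}$. For $U_i\cap\gamma_c(D_n)$, apply the normal form $u_n\cdots u_2$: an element of $U_i$ has $u_k=1$ for $k\neq i$. Membership in $\gamma_c(D_n)$ then forces $u_i\in W_{i,c}$ by uniqueness of the iterated decomposition.
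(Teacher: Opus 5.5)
Your proof is correct, but at the crucial inclusion $\gamma_c^{D_{i-1}}(U_i)\subseteq W_{i,c}$ it takes a different route from the paper.

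The paper proves only a shift-by-one estimate. Each generator $d_{q,l}$ of $D_{i-1}$ fixes every commutator expression of weighted degree $m$ modulo $W_{i,m+1}$, so $(D_{i-1},W_{i,c})\subseteq W_{i,c+1}$. Darn\'e's Corollary~1.25 then upgrades this to $(\gamma_a(D_{i-1}),W_{i,b})\subseteq W_{i,a+b}$, and minimality (Lemma~1.31) finishes. So, contrary to your remark, this step needs neither an inductive hypothesis on $\gamma_a(D_{i-1})$ nor a sharper shift.

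You instead do three things. First, you show that $d_{k,l}^{\pm1}$ shifts weighted degree by its own weight $k-l$. The real justification for this is the paper's induction on commutator expressions with $1$ replaced by $k-l$. It is not an appeal to a free Lie ring structure, which Lemma~\ref{weightedhall}(2) does not assert; your ``to see this'' sentence is circular as written. Second, you propagate along commutator expressions in $U_k$ by the three subgroup lemma, using $W_{i,c}\trianglelefteq D_i$, to get $(W_{k,a},W_{i,b})\subseteq W_{i,a+b}$. Third, you induct on $i$ so that $\Gamma_c=\langle W_{2,c},\ldots,W_{i,c}\rangle$ is an $N$-series of $D_i$.

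Since $\Gamma_1=D_i$, the $N$-series property alone gives $\gamma_c(D_i)\subseteq\Gamma_c$; where the $d_{k,l}$ sit is not needed for this. Together with $W_{k,c}\subseteq\gamma_c(D_k)$, you get $\gamma_c(D_i)=\Gamma_c=W_{i,c}\rtimes(\Gamma_c\cap D_{i-1})$ directly.

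What your route buys: part~(2) and the intersection formulas hold without the split-extension formula or Darn\'e's lemmas, which are needed only to identify $W_{i,c}$ with the relative filtration. You also obtain the finer compatibility of the weighted filtrations of different $U_k$. The paper's route is shorter, uses only the shift-by-one information, and needs no induction on $i$ for part~(1).
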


\begin{proof}
\begin{enumerate}
\item Fix $i\in\{3,\ldots,n\}$ and, for $c\geq1$, write
$R_c=\gamma_c^{D_{i-1}}(U_i)$. Since $D_i=U_i\rtimes D_{i-1}$, the relative lower central
decomposition gives
\[
\gamma_c(D_i)=R_c\rtimes\gamma_c(D_{i-1}).
\]
We first show that $U_i\cap\gamma_c(D_i)=R_c$.
The inclusion $R_c\subseteq U_i\cap\gamma_c(D_i)$ is immediate. Conversely,
if $g\in U_i\cap\gamma_c(D_i)$, then $g=rv$
for some $r\in R_c$ and $v\in\gamma_c(D_{i-1})$. Since
$r,g\in U_i$, we have $v=r^{-1}g\in U_i\cap D_{i-1}$. By Proposition~\ref{pro1}, $U_i\cap D_{i-1}=\{1\}$. Thus $g=r\in R_c$, proving the reverse inclusion. 

Consequently, equation~\eqref{eq-weighted-in-lcs} gives $W_{i,c}\subseteq R_c$.
It remains to prove $R_c\subseteq W_{i,c}$. For
$v\in D_{i-1}$, let
$\alpha_v\in\operatorname{Aut}(U_i)$ be defined by
$\alpha_v(u)=vuv^{-1}$. For $1\leq l<q<i$,
$\varepsilon\in\{\pm1\}$, and $1\leq r<i$,
Lemma~\ref{lem1}(2) gives
\[
\alpha_{d_{q,l}^{\varepsilon}}(d_{i,r})
=
\begin{cases}
d_{i,q}d_{i,l}^{\varepsilon},&r=q,\\
d_{i,r},&r\neq q.
\end{cases}
\]
Since
\[
d_{i,l}\in W_{i,i-l}\subseteq W_{i,i-q+1},
\]
we obtain
\[
\alpha_{d_{q,l}^{\varepsilon}}(d_{i,r})
\equiv d_{i,r}
\pmod{W_{i,i-r+1}}.
\]

Fix $1\leq l<q<i$ and $\varepsilon\in\{\pm1\}$, and write
$\alpha=\alpha_{d_{q,l}^{\varepsilon}}$. By
Lemma~\ref{weightedhall}(1), $\{W_{i,c}\}_{c\geq1}$ is an
$N$-series of $U_i$. We show by induction on the construction of
commutator expressions that
\[
\alpha(u)\equiv u\pmod{W_{i,m+1}}
\]
whenever $u$ is a commutator expression in the free generators of
$U_i$ of weighted degree $m$. Note that every such expression
belongs to $W_{i,m}$. The assertion has already been proved for the
free generators.

Suppose first that $u=a^{-1}$, where $a$ has weighted degree $m$,
and that the assertion holds for $a$. Since
$\alpha(a)\equiv a\pmod{W_{i,m+1}}$ and $W_{i,m+1}$ is normal in $U_i$, 
we have
\[
\alpha(u)=\alpha(a)^{-1}
\equiv a^{-1}=u
\pmod{W_{i,m+1}}.
\]

Now suppose that $u=(a,b)$, where $a$ and $b$ have weighted degrees
$s$ and $t$, respectively, with $s,t\geq1$ and $s+t=m$. Thus
$a\in W_{i,s}$ and $b\in W_{i,t}$. By the induction hypothesis,
\[
\alpha(a)\equiv a\pmod{W_{i,s+1}}
\qquad\text{and}\qquad
\alpha(b)\equiv b\pmod{W_{i,t+1}}.
\]
Since $\{W_{i,c}\}_{c\geq1}$ is an $N$-series of $U_i$, we have
\[
(W_{i,s+1},W_{i,t})\subseteq W_{i,s+t+1},
\qquad
(W_{i,s},W_{i,t+1})\subseteq W_{i,s+t+1}.
\]
The standard commutator identities therefore show that the image
of $(x,y)$ modulo $W_{i,s+t+1}$, for $x\in W_{i,s}$ and
$y\in W_{i,t}$, depends only on the image of $x$ modulo
$W_{i,s+1}$ and the image of $y$ modulo $W_{i,t+1}$.
Consequently,
\[
\alpha(u)=\alpha((a,b))
\equiv(a,b)=u
\pmod{W_{i,m+1}}.
\] 
This completes the induction.

Let $u$ be a commutator expression of weighted degree $m\geq c$.
The preceding congruence gives
\[
\alpha_{d_{q,l}}(u)u^{-1}\in W_{i,m+1}\subseteq W_{i,c}.
\]
Since $u\in W_{i,m}\subseteq W_{i,c}$, it follows that
$\alpha_{d_{q,l}}(u)\in W_{i,c}$. The same argument, applied to
$\alpha_{d_{q,l}}^{-1}=\alpha_{d_{q,l}^{-1}}$, shows that
$\alpha_{d_{q,l}}^{-1}$ also maps $W_{i,c}$ into itself. 
Since the commutator expressions of weighted degree at least $c$ 
generate $W_{i,c}$, we conclude that
\[
\alpha_{d_{q,l}}(W_{i,c})=W_{i,c}.
\]
Since $\alpha_{d_{q,l}}$ preserves both $W_{i,c}$ and
$W_{i,c+1}$, it induces an automorphism
\[
\overline{\alpha}_{d_{q,l}}:
W_{i,c}/W_{i,c+1}
\longrightarrow
W_{i,c}/W_{i,c+1}.
\]
If $b$ is a basic commutator of weighted degree $c$, then
$\overline{\alpha}_{d_{q,l}}\bigl(bW_{i,c+1}\bigr)=bW_{i,c+1}$.
By Lemma~\ref{weightedhall}(2), these elements form a basis of
$W_{i,c}/W_{i,c+1}$. Hence
$\overline{\alpha}_{d_{q,l}}$ is the identity automorphism.

The elements $d_{q,l}$, where $1\leq l<q<i$, generate
$D_{i-1}$. Since each $\alpha_{d_{q,l}}$ induces the identity on
$W_{i,c}/W_{i,c+1}$, the same holds for $\alpha_v$ for every
$v\in D_{i-1}$. Thus
\[
vwv^{-1}W_{i,c+1}=wW_{i,c+1}
\]
for all $v\in D_{i-1}$ and $w\in W_{i,c}$, or equivalently,
\[
vwv^{-1}w^{-1}\in W_{i,c+1}.
\]
Replacing $v$ and $w$ by their inverses gives
\[
(v,w)=v^{-1}w^{-1}vw\in W_{i,c+1}.
\] 
Hence
\[
(D_{i-1},W_{i,c})
\subseteq
W_{i,c+1}
\qquad(c\geq1).
\]

Since $\{W_{i,c}\}_{c\geq1}$ is an $N$-series of $U_i$, the
preceding inclusion and \cite[Corollary~1.25]{darsu} give
\[
(\gamma_a(D_{i-1}),W_{i,b})
\subseteq W_{i,a+b}
\qquad(a,b\geq1).
\]
Thus the lower central series of $D_{i-1}$ acts on
$\{W_{i,c}\}_{c\geq1}$. Therefore, by
\cite[Lemma~1.31]{darsu}, $R_c\subseteq W_{i,c}$.
Hence $R_c=W_{i,c}$ for every $c\geq1$.

\item Fix $c\geq1$. For every $i\in\{3,\ldots,n\}$,
Proposition~\ref{pro1} gives
$D_i=U_i\rtimes D_{i-1}$. Hence part~(1) and the
split-extension formula give
\[
\gamma_c(D_i)
=
\gamma_c^{D_{i-1}}(U_i)\rtimes\gamma_c(D_{i-1})
=
W_{i,c}\rtimes\gamma_c(D_{i-1}).
\]

Since $D_2=U_2\cong\mathbb Z$, we have
$\gamma_1(D_2)=U_2$ and $\gamma_c(D_2)=\{1\}$ for $c\geq2$.
Moreover, the generator $d_{2,1}$ has weighted degree one, while
every commutator in $U_2$ is trivial. Hence
$W_{2,1}=U_2$ and $W_{2,c}=\{1\}$ for $c\geq2$. Therefore
$\gamma_c(D_2)=W_{2,c}$ for every $c\geq1$.
We now prove the required decomposition by induction.
For $k=3$, the preceding equality gives
\[
\gamma_c(D_3)
=
W_{3,c}\rtimes\gamma_c(D_2)
=
W_{3,c}\rtimes W_{2,c}.
\]
Now let $k\geq4$ and assume that
\[
\gamma_c(D_{k-1})=
W_{k-1,c}\rtimes
\bigl(
W_{k-2,c}\rtimes
(\cdots\rtimes(W_{3,c}\rtimes W_{2,c})\cdots)
\bigr).
\]
Applying the preceding equality with $i=k$ gives
\[
\gamma_c(D_k)=W_{k,c}\rtimes\gamma_c(D_{k-1}).
\]
Substituting the induction hypothesis, we obtain
\[
\gamma_c(D_k)=
W_{k,c}\rtimes
\bigl(
W_{k-1,c}\rtimes
(\cdots\rtimes(W_{3,c}\rtimes W_{2,c})\cdots)
\bigr).
\]
This completes the induction. Therefore,
\[
\gamma_c(D_n)=
W_{n,c}\rtimes
\bigl(
W_{n-1,c}\rtimes
(\cdots\rtimes(W_{3,c}\rtimes W_{2,c})\cdots)
\bigr).
\]

We now prove that
\[
U_i\cap\gamma_c(D_n)=W_{i,c}
\qquad(i=2,\ldots,n).
\]
The inclusion
$W_{i,c}\subseteq U_i\cap\gamma_c(D_n)$ follows immediately from
the preceding decomposition. Conversely, let
$g\in U_i\cap\gamma_c(D_n)$. By the preceding decomposition, there
exist $w_j\in W_{j,c}$, for $j=2,\ldots,n$, such that
\[
g=w_nw_{n-1}\cdots w_2.
\] 
Since $W_{j,c}\subseteq U_j$, 
this is a normal form of the type given
in Proposition~\ref{pro1}. Since $g\in U_i$, the uniqueness of the normal form in
Proposition~\ref{pro1} gives $w_i=g$ and $w_j=1$ for $j\neq i$. Thus $g=w_i\in W_{i,c}$, proving the reverse inclusion.
Hence $U_i\cap\gamma_c(D_n)=W_{i,c}$ for every $i=2,\ldots,n$.
\end{enumerate}
\end{proof}

We can now determine the exact lower-central depths of the basic
commutators.

\begin{corollary}\label{cor-exact-weight}
Let $2\leq i\leq n$, and let $b$ be a basic commutator in the
free generators $d_{i,1},\ldots,d_{i,i-1}$ of weighted degree
$c\geq1$. Then
\[
b\in\gamma_c(D_i)\setminus\gamma_{c+1}(D_i).
\]
Moreover,
\[
b\in\gamma_c(D_n)\setminus\gamma_{c+1}(D_n).
\]
Thus the weighted degree of $b$ is its exact lower-central depth in
both $D_i$ and $D_n$.
\end{corollary}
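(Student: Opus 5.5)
The plan is to combine three facts already established: the inclusion \eqref{eq-weighted-in-lcs}, the strictness statement of Lemma~\ref{weightedhall}(2), and the intersection formulas of Proposition~\ref{prop-weighted-intersection}.

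\emph{Membership.} Since $b$ is a commutator expression of weighted degree $c$, it lies in $W_{i,c}$. Then \eqref{eq-weighted-in-lcs} gives $b\in\gamma_c(D_i)$. Because $D_i\leq D_n$, monotonicity of the lower central series under inclusion of groups gives $\gamma_c(D_i)\subseteq\gamma_c(D_n)$, so $b\in\gamma_c(D_n)$ as well.

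\emph{Non-membership.} By Lemma~\ref{weightedhall}(2), the image of $b$ is a basis element of the free abelian group $W_{i,c}/W_{i,c+1}$. In particular this image is nontrivial, so $b\notin W_{i,c+1}$.

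Suppose first that $i\geq3$. Proposition~\ref{prop-weighted-intersection}(1) gives $U_i\cap\gamma_{c+1}(D_i)=W_{i,c+1}$. Since $b\in U_i$ but $b\notin W_{i,c+1}$, it follows that $b\notin\gamma_{c+1}(D_i)$. In the same way, Proposition~\ref{prop-weighted-intersection}(2) gives $U_i\cap\gamma_{c+1}(D_n)=W_{i,c+1}$ for every $i\in\{2,\ldots,n\}$, and hence $b\notin\gamma_{c+1}(D_n)$.

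The case $i=2$ needs separate handling only for $D_i$, because part (1) of the proposition is stated for $i\geq3$. Here $U_2=\langle d_{2,1}\rangle$, so the only basic commutator is $b=d_{2,1}$, with $c=1$. Since $D_2$ is infinite cyclic, $\gamma_2(D_2)=\{1\}$, and therefore $b\notin\gamma_2(D_2)$. Alternatively, one can use the equality $\gamma_c(D_2)=W_{2,c}$ recorded in the proof of Proposition~\ref{prop-weighted-intersection}(2).

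The argument is essentially bookkeeping, since all the substantive work was done in Proposition~\ref{prop-weighted-intersection}. The only step needing care is the $i=2$ boundary case for $D_i$.
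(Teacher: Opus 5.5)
Your proposal is correct and follows essentially the same route as the paper. Both arguments get $b\in W_{i,c}\setminus W_{i,c+1}$ from Lemma~\ref{weightedhall}(2), transfer it to $D_i$ and $D_n$ through the intersection formulas of Proposition~\ref{prop-weighted-intersection}, and handle $i=2$ directly because $D_2$ is infinite cyclic. The only cosmetic difference is that you obtain membership in $\gamma_c(D_n)$ from \eqref{eq-weighted-in-lcs} and the inclusion $\gamma_c(D_i)\subseteq\gamma_c(D_n)$, rather than from the equality $U_i\cap\gamma_c(D_n)=W_{i,c}$.
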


\begin{proof}
By Lemma~\ref{weightedhall}(2),
$b\in W_{i,c}\setminus W_{i,c+1}$.
If $i\geq3$, Proposition~\ref{prop-weighted-intersection}(1) gives
\[
U_i\cap\gamma_c(D_i)=W_{i,c}
\qquad\text{and}\qquad
U_i\cap\gamma_{c+1}(D_i)=W_{i,c+1}.
\]
Hence $b\in\gamma_c(D_i)\setminus\gamma_{c+1}(D_i)$.
For $i=2$, the assertion is immediate since $U_2=D_2$ is infinite
cyclic and the only basic commutator on one free generator is the
generator itself, which has weighted degree one.

Finally, Proposition~\ref{prop-weighted-intersection}(2) gives
\[
U_i\cap\gamma_c(D_n)=W_{i,c}
\qquad\text{and}\qquad
U_i\cap\gamma_{c+1}(D_n)=W_{i,c+1}.
\]
Therefore
$b\in\gamma_c(D_n)\setminus\gamma_{c+1}(D_n)$.
\end{proof}

\section{Lower-central factors and residual consequences}
\label{sec-magnus}

\begin{proposition}\label{prop-lower-central-factors}
For every $n\geq3$ and every $c\geq1$,
\[
\frac{\gamma_c(D_n)}{\gamma_{c+1}(D_n)}
\cong
\bigoplus_{i=2}^{n}
\frac{W_{i,c}}{W_{i,c+1}}.
\]
Moreover, the images of the basic commutators of weighted degree
$c$ in each $U_i$, $2\leq i\leq n$, together form a finite basis of
$\gamma_c(D_n)/\gamma_{c+1}(D_n)$. In particular, this group is
finitely generated free abelian.
\end{proposition}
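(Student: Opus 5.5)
The plan is to use the iterated semidirect decomposition of Proposition~\ref{prop-weighted-intersection}(2) twice: once at level $c$ and once at level $c+1$. Together with the normal form of Proposition~\ref{pro1}, this lets us build an explicit isomorphism. Every $g\in\gamma_c(D_n)$ can be written uniquely as $g=w_nw_{n-1}\cdots w_2$ with $w_j\in W_{j,c}$. Define
\[
\Phi:\gamma_c(D_n)\longrightarrow\bigoplus_{i=2}^{n}\frac{W_{i,c}}{W_{i,c+1}},
\qquad
\Phi(w_n\cdots w_2)=(w_nW_{n,c+1},\ldots,w_2W_{2,c+1}).
\]
The proof then has three steps: show that $\Phi$ is a homomorphism, show that it is surjective, and identify its kernel with $\gamma_{c+1}(D_n)$.

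Surjectivity is clear from the definition. For the kernel, $\Phi(g)=0$ means each $w_j$ lies in $W_{j,c+1}$. By Proposition~\ref{prop-weighted-intersection}(2) at level $c+1$, this holds exactly when $g\in\gamma_{c+1}(D_n)$. The converse direction uses uniqueness of the normal form: an element of $\gamma_{c+1}(D_n)$ has a normal form with components in $W_{j,c+1}\subseteq W_{j,c}$, and this must be the same normal form.

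The main obstacle is the homomorphism property. We argue by induction on $n$, using $\gamma_c(D_k)=W_{k,c}\rtimes\gamma_c(D_{k-1})$. Take $g=w\,h$ and $g'=w'h'$ with $w,w'\in W_{k,c}$ and $h,h'\in\gamma_c(D_{k-1})$. Then
\[
gg'=\bigl(w\,(w')^{h^{-1}}\bigr)\,(hh').
\]
So the $U_k$-component of $gg'$ is $w\,(w')^{h^{-1}}$, and it suffices to show
\[
(w')^{h^{-1}}\equiv w'\pmod{W_{k,c+1}}.
\]
This follows from the inclusion $(D_{k-1},W_{k,c})\subseteq W_{k,c+1}$ established in the proof of Proposition~\ref{prop-weighted-intersection}(1). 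The $D_{k-1}$-component of $gg'$ is $hh'$, which is handled by the induction hypothesis. The base case is $D_2=U_2$, where $\gamma_c(D_2)=W_{2,c}$ and the map is just the quotient map.

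Finally, we read off the basis. Lemma~\ref{weightedhall}(2) says that each $W_{i,c}/W_{i,c+1}$ is free abelian with basis the images of the basic commutators of weighted degree $c$ in $U_i$, and there are finitely many of these. Under $\Phi^{-1}$, these correspond to the images of the same basic commutators in $\gamma_c(D_n)/\gamma_{c+1}(D_n)$. To check this, note that a basic commutator $b\in U_i$ of weighted degree $c$ has normal form with only the $i$-th component nonzero, so $\Phi(b)$ is the basis element $bW_{i,c+1}$ placed in the $i$-th summand. Hence these images form a finite basis, and the quotient is finitely generated free abelian.
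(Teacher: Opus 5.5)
Your argument is correct. It differs from the paper's in the direction of the map, and that changes where the work lies.

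\textbf{The paper's route.} The paper defines $\Phi_c\colon\bigoplus_{i}W_{i,c}/W_{i,c+1}\to\gamma_c(D_n)/\gamma_{c+1}(D_n)$ by $(\overline w_2,\ldots,\overline w_n)\mapsto\overline{w_n\cdots w_2}$. Because the target is abelian, $\Phi_c$ is just the sum of the maps $\phi_{i,c}$ induced by inclusion, so it is a homomorphism for free. Surjectivity and injectivity then follow from the decomposition in Proposition~\ref{prop-weighted-intersection}(2) at levels $c$ and $c+1$ and from uniqueness of the normal form in Proposition~\ref{pro1}. This is exactly the reasoning in your kernel computation.

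\textbf{Your route.} Your map $\gamma_c(D_n)\to\bigoplus_i W_{i,c}/W_{i,c+1}$ reads off normal-form coordinates, and it is not obviously multiplicative. You therefore need an extra input, the inclusion $(D_{k-1},W_{k,c})\subseteq W_{k,c+1}$, plus an induction to control the twisted factor $(w')^{h^{-1}}=w'(w',h^{-1})$. Your verification of this step is sound. Strictly, the induction runs over $k=2,\ldots,n$ with $n$ fixed, using $\gamma_c(D_k)=W_{k,c}\rtimes\gamma_c(D_{k-1})$ from the proof of Proposition~\ref{prop-weighted-intersection}(2). Calling it ``induction on $n$'' is harmless but slightly imprecise, because the base case $D_2$ lies below the range $n\geq3$ of the statement.

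\textbf{Comparison.} Your route gives a bit more. It shows directly that each coordinate map $g\mapsto w_iW_{i,c+1}$ is a homomorphism on $\gamma_c(D_n)$. It also yields the isomorphism, and with it the abelianness of the quotient, in one step via the first isomorphism theorem. The paper's route is more economical, since at this stage it never needs the triviality of the $D_{k-1}$-action on $W_{k,c}/W_{k,c+1}$.
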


\begin{proof}
Fix $n\geq3$ and $c\geq1$. By
Proposition~\ref{prop-weighted-intersection}(2), for
$2\leq i\leq n$,
\[
U_i\cap\gamma_c(D_n)=W_{i,c}
\qquad\text{and}\qquad
U_i\cap\gamma_{c+1}(D_n)=W_{i,c+1}.
\]
For every $2\leq i\leq n$, define
\[
\phi_{i,c}:
\frac{W_{i,c}}{W_{i,c+1}}
\longrightarrow
\frac{\gamma_c(D_n)}{\gamma_{c+1}(D_n)},
\qquad
wW_{i,c+1}\longmapsto
w\gamma_{c+1}(D_n).
\]
This is a well-defined homomorphism since
$W_{i,c+1}\subseteq\gamma_{c+1}(D_n)$. Moreover, since
$W_{i,c}\subseteq U_i$, we have
\[
W_{i,c}\cap\gamma_{c+1}(D_n)=
W_{i,c}\cap\bigl(U_i\cap\gamma_{c+1}(D_n)\bigr)=
W_{i,c}\cap W_{i,c+1}=
W_{i,c+1}.
\]
Therefore
\[
\ker\phi_{i,c}
=
\frac{W_{i,c}\cap\gamma_{c+1}(D_n)}
{W_{i,c+1}}
=
\{1\},
\]
and hence $\phi_{i,c}$ is injective.

Since $\gamma_c(D_n)/\gamma_{c+1}(D_n)$ is abelian, the
homomorphisms $\phi_{i,c}$ determine a homomorphism
\[
\Phi_c:
\bigoplus_{i=2}^{n}\frac{W_{i,c}}{W_{i,c+1}}
\longrightarrow
\frac{\gamma_c(D_n)}{\gamma_{c+1}(D_n)}
\]
given by
\[
\Phi_c(\overline w_2,\ldots,\overline w_n)
=
\overline{w_n\cdots w_2}.
\]
By Proposition~\ref{prop-weighted-intersection}(2), every element of
$\gamma_c(D_n)$ can be written as $w_n\cdots w_2$, where
$w_i\in W_{i,c}$ for $2\leq i\leq n$. Hence $\Phi_c$ is
surjective.

Let $w_i\in W_{i,c}$ for $2\leq i\leq n$, and suppose that
$\Phi_c(\overline w_2,\ldots,\overline w_n)=1$. Then
\[
w_n\cdots w_2\in\gamma_{c+1}(D_n).
\]
Again by Proposition~\ref{prop-weighted-intersection}(2), there
exist $v_i\in W_{i,c+1}$, for $2\leq i\leq n$, such that
\[
w_n\cdots w_2=v_n\cdots v_2.
\] 
Since $w_i,v_i\in U_i$ 
for $2\leq i\leq n$, the uniqueness of the
normal form in Proposition~\ref{pro1} gives $w_i=v_i$ for every
$i=2,\ldots,n$. Hence $w_i\in W_{i,c+1}$ for every
$i=2,\ldots,n$, and therefore $\overline w_i=1$. Thus $\Phi_c$ is
injective.

Consequently,
\[
\frac{\gamma_c(D_n)}{\gamma_{c+1}(D_n)}
\cong
\bigoplus_{i=2}^{n}
\frac{W_{i,c}}{W_{i,c+1}}.
\]
By Lemma~\ref{weightedhall}(2), each summand is a finitely generated
free abelian group with basis given by the images of the basic
commutators of weighted degree $c$ in the free generators of $U_i$.
Thus the images of all these basic commutators, taken over
$i=2,\ldots,n$, form a finite basis of
$\gamma_c(D_n)/\gamma_{c+1}(D_n)$.
\end{proof}

\begin{corollary}\label{cor-residual}
For every $n\geq3$, the group $D_n$ is residually nilpotent.
\end{corollary}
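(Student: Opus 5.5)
The plan is to combine the decomposition of Proposition~\ref{prop-weighted-intersection}(2) with the uniqueness of normal forms in Proposition~\ref{pro1}, and then use Lemma~\ref{weightedhall}(3) coordinatewise.

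Let $g\in\bigcap_{c\geq1}\gamma_c(D_n)$. By Proposition~\ref{pro1}, write $g=u_nu_{n-1}\cdots u_2$ uniquely with $u_i\in U_i$.

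Fix $c\geq1$. Since $g\in\gamma_c(D_n)$, Proposition~\ref{prop-weighted-intersection}(2) gives elements $w_i^{(c)}\in W_{i,c}\subseteq U_i$ with $g=w_n^{(c)}\cdots w_2^{(c)}$. This is again a normal form of the type in Proposition~\ref{pro1}. By uniqueness, $u_i=w_i^{(c)}$ for every $i$. Hence $u_i\in W_{i,c}$ for every $c\geq1$, so
\[
u_i\in\bigcap_{c\geq1}W_{i,c}=\{1\}
\]
by Lemma~\ref{weightedhall}(3). Therefore $g=1$.

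An alternative, avoiding normal forms, is induction on $n$ through $D_i=U_i\rtimes D_{i-1}$, projecting onto $D_{i-1}$. Either way, the argument is bookkeeping once Proposition~\ref{prop-weighted-intersection} is in hand. The only point needing care is that the normal-form components of $g$ do not depend on $c$, and uniqueness guarantees exactly this.
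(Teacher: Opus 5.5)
Your proof is correct and essentially identical to the paper's. You write $g$ in the normal form of Proposition~\ref{pro1}, use Proposition~\ref{prop-weighted-intersection}(2) and uniqueness of the normal form to get $u_i\in W_{i,c}$ for every $c$, and conclude with Lemma~\ref{weightedhall}(3), with uniqueness being exactly what makes the components independent of $c$.
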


\begin{proof}
Fix $n\geq3$, and let
$g\in\bigcap_{c\geq1}\gamma_c(D_n)$. By
Proposition~\ref{pro1}, write
\[
g=u_nu_{n-1}\cdots u_2,
\qquad
u_i\in U_i\quad(2\leq i\leq n).
\]
For every $c\geq1$,
Proposition~\ref{prop-weighted-intersection}(2) and the uniqueness
of the normal form give $u_i\in W_{i,c}$ for
$i=2,\ldots,n$. Hence, for $2\leq i\leq n$, 
$u_i\in\bigcap_{c\geq1}W_{i,c}=\{1\}$
by Lemma~\ref{weightedhall}(3). Thus $g=1$.
\end{proof}

\begin{proof}[Proof of Theorem~\ref{mainthm}]
\begin{enumerate}
\item For $i\geq3$, the equalities
\[
W_{i,c}
=
\gamma_c^{D_{i-1}}(U_i)
=
U_i\cap\gamma_c(D_i)
\]
are given by Proposition~\ref{prop-weighted-intersection}(1).
For $i=2$, the equality $W_{2,c}=U_2\cap\gamma_c(D_2)$
is immediate from $D_2=U_2\cong\mathbb Z$.
Finally, for $2\leq i\leq n$,
Proposition~\ref{prop-weighted-intersection}(2) gives
$U_i\cap\gamma_c(D_n)=W_{i,c}$.

\item This is Proposition~\ref{prop-weighted-intersection}(2).

\item The direct-sum decomposition and the stated basis are given by
Proposition~\ref{prop-lower-central-factors}. The assertion about
exact lower-central depth follows from Corollary~\ref{cor-exact-weight}.

\item By Corollary~\ref{cor-residual}, the group $D_n$ is residually
nilpotent. By Proposition~\ref{prop-lower-central-factors}, every
quotient $\gamma_c(D_n)/\gamma_{c+1}(D_n)$ is torsion-free. 
Therefore $D_n$ is a Magnus group.
\end{enumerate}
\end{proof}

\begin{remark}\label{rem1}
\upshape
Theorem~\ref{mainthm} implies, in particular, that $D_n$ is
residually torsion-free nilpotent. Indeed, let $1\neq g\in D_n$.
By residual nilpotence, there exists $c\geq1$ such that
$g\notin\gamma_{c+1}(D_n)$. Write $Q=D_n/\gamma_{c+1}(D_n)$.
Then the image of $g$ in $Q$ is non-trivial, and
$\gamma_{c+1}(Q)=\{1\}$, so $Q$ is nilpotent of class at most $c$.
It suffices to show that $Q$ is torsion-free.

For each $j=1,\ldots,c$, we have a natural isomorphism
\[
\frac{\gamma_j(Q)}{\gamma_{j+1}(Q)}
\cong
\frac{\gamma_j(D_n)}{\gamma_{j+1}(D_n)}.
\]
Hence $\gamma_j(Q)/\gamma_{j+1}(Q)$ is torsion-free by
Theorem~\ref{mainthm}.
Let $h$ be a torsion element of $Q$. We show inductively that
$h\in\gamma_j(Q)$ for every $j=1,\ldots,c+1$. This is clear for
$j=1$. If $h\in\gamma_j(Q)$, then its image in the torsion-free
group $\gamma_j(Q)/\gamma_{j+1}(Q)$
has finite order and is therefore trivial. Hence
$h\in\gamma_{j+1}(Q)$. It follows that
$h\in\gamma_{c+1}(Q)=\{1\}$.
Thus $Q$ is torsion-free.
Therefore the non-trivial image of $g$ is detected in the
torsion-free nilpotent quotient $Q$. Hence $D_n$ is residually
torsion-free nilpotent.

The residual torsion-free nilpotence of $D_n$ can alternatively be
obtained directly from \cite[Theorem~7.55]{darsu}, without using the
explicit identification of the relative lower central filtrations
obtained above. For $i=3,\ldots,n$,
Proposition~\ref{pro1} gives
$D_i=U_i\rtimes D_{i-1}$. The group $D_2=U_2$ is infinite cyclic,
and every $U_i$, for $i=2,\ldots,n$, is a finitely generated free
group. Hence $D_2$ and all the groups $U_i$ are residually
torsion-free nilpotent.

Let $3\leq i\leq n$ and assume inductively that $D_{i-1}$ is
residually torsion-free nilpotent. Write
\[
\varepsilon_r=\overline{d}_{i,r}\otimes1
\in U_i^{\mathrm{ab}}\otimes\mathbb Q,
\qquad
r=1,\ldots,i-1.
\]
For $1\leq l<j<i$, Lemma~\ref{lem1}(2) gives
\[
d_{j,l}^{-1}d_{i,j}d_{j,l}=d_{i,j}d_{i,l}^{-1}.
\]
Since $d_{j,l}$ commutes with $d_{i,l}$, it follows that
\[
d_{j,l}d_{i,j}d_{j,l}^{-1}=d_{i,j}d_{i,l}.
\]
Hence the conjugation action of $d_{j,l}$ on
$U_i^{\mathrm{ab}}\otimes\mathbb Q$, induced by
$u\mapsto d_{j,l}ud_{j,l}^{-1}$, sends
$\varepsilon_j$ to $\varepsilon_j+\varepsilon_l$ and fixes
$\varepsilon_r$ for $r\neq j$. Since the elements $d_{j,l}$, 
where $1\leq l<j<i$, generate
$D_{i-1}$, the action of $D_{i-1}$ preserves the filtration
\[
0=V_0<V_1<\cdots<V_{i-1}
=U_i^{\mathrm{ab}}\otimes\mathbb Q,
\]
where
\[
V_r=\operatorname{span}_{\mathbb Q}
\{\varepsilon_1,\ldots,\varepsilon_r\}
\qquad(1\leq r\leq i-1),
\]
and acts trivially on every quotient $V_r/V_{r-1}$. Thus the action
is unipotent. Applying \cite[Theorem~7.55]{darsu} with
$R=\mathbb Q$, we conclude that
$D_i=U_i\rtimes D_{i-1}$ is residually torsion-free nilpotent.
Induction on $i$ therefore shows that $D_n$ is residually
torsion-free nilpotent.
\end{remark}

\end{document}